
\documentclass[12pt]{article}

\usepackage{amsthm,amssymb, amsmath, amscd, amsfonts,graphicx, latexsym,verbatim}
\usepackage{amsmath}
\usepackage{cite, psfrag,color}
\usepackage{titlesec}
\usepackage{epstopdf}
\usepackage{fancybox}
\usepackage{float}
\usepackage[colorlinks,citecolor=blue]{hyperref}
\usepackage[numbers,square]{natbib}
\usepackage[font=normalsize,labelsep=none]{caption}
\usepackage{diagbox}
\usepackage{changepage}
\usepackage{enumerate}
\usepackage{arydshln}
\usepackage{bbm}
\usepackage{setspace}
\usepackage{titletoc}
\usepackage{mdframed}
\usepackage{algorithm,algorithmic}
\usepackage{empheq}   
\usepackage{framed}
\usepackage{hyperref}

\setstretch{1.2}  

 \newtheorem{theorem}{Theorem}[section]
 
 \newtheorem{lemma}{Lemma}[section]
 \newtheorem{corollary}{Corollary}[section]
 \newtheorem{example}{Example}[section]
 \newtheorem{remark}{Remark}[section]

\usepackage{enumitem}
\setenumerate[1]{itemsep=0pt,partopsep=0pt,parsep=\parskip,topsep=2pt}
\setitemize[1]{itemsep=0pt,partopsep=0pt,parsep=\parskip,topsep=2pt}
\setdescription{itemsep=0pt,partopsep=0pt,parsep=\parskip,topsep=2pt}

\usepackage{mdframed}
\mdfsetup{
	linewidth=0.75pt,        
}

\titleformat{\section} 
{\raggedright\large\bfseries\bf} 
{\thesection .\quad} 
{0pt} 
{}  
 
\titleformat{\subsection} 
{\raggedright\normalsize\bfseries\it} 
{\thesubsection .\quad} 
{0pt} 
{} 

\makeatletter
\makeatother

\setcounter{section}{1}

\setlength{\topmargin}{-2.0cm}
\setlength{\oddsidemargin}{0.0cm}
\setlength{\textwidth}{16.7cm}
\setlength{\textheight}{24.0cm}

\setcounter{section}{0}

\newlength{\boxedparwidth}
\setlength{\boxedparwidth}{1.\textwidth}
  {\begin{center} \begin{tabular}{|@{\hspace{.315in}}c@{\hspace{.15in}}|}
                  \hline \\ \begin{minipage}[t]{\boxedparwidth}
                  \setlength{\parindent}{.25in}}%
  {\end{minipage} \\ \\ \hline \end{tabular} \end{center}}

\parskip 6pt
\begin{document}
\captionsetup[figure]{labelfont={bf},labelformat={default},labelsep=period,name={Fig.},font={footnotesize}}
\captionsetup[table]{labelfont={bf},labelformat={default},labelsep=period,name={Table},font={footnotesize}}

\begin{center}
{\Large{\bf{New $r$-Euler--Mahonian statistics involving Denert's statistic
			\vskip 1mm}}}
\end{center}
\vspace{0mm}
\begin{center}
	\text{Shao-Hua Liu}\\
	\vskip 1mm
	{School of Statistics and Data Science\\
		Guangdong University of Finance and Economics\\
		Guangzhou, China\\
		Email: liushaohua@gdufe.edu.cn}
\end{center}
 \title{}

 \vskip 1mm

\noindent {\bf Abstract.}
Recently, we proved the equidistribution of the pairs of permutation statistics $(r\textsf{des},r\textsf{maj})$ and $(r\textsf{exc},r\textsf{den})$.
Any pair of permutation statistics that is equidistributed with these pairs is said to be $r$-Euler--Mahonian.
Several classes of $r$-Euler--Mahonian statistics were established by Huang--Lin--Yan and Huang--Yan.
Inspired by their bijections,
we provide a new bijective proof of the classical result that $(\textsf{exc},\textsf{den})$ is Euler--Mahonian.
Using this bijection, we further show that $(\textsf{exc}_{r},\textsf{den})$ is $r$-Euler--Mahonian,
where $\textsf{exc}_{r}$ denotes the number of $r$-level excedances (i.e., excedances at least $r$).
Furthermore, by extending our bijection, 
we establish a more general result that encompasses all the aforementioned results.

   \vskip 1mm
\noindent {\bf Keywords}: Denert's statistic,
Euler--Mahonian statistic, $r$-Euler--Mahonian statistic
   \vskip 1mm
\noindent {\bf AMS Classification}: 05A05, 05A19
   \vskip -2mm

\titlecontents{section}[1.5em]
{\footnotesize \vspace{-10pt}}
{\contentslabel{1.5em}}{\hspace*{-1.6em}}
{~\titlerule*[0.6pc]{$.$}~\contentspage}

\titlecontents{subsection}[3.5em]
{\footnotesize \vspace{-10pt}}
{\contentslabel{2.3em}}{\hspace*{-4em}}
{~\titlerule*[0.6pc]{$.$}~\contentspage}


\section{Introduction}
\subsection{Euler--Mahonian statistics} 
Let $\mathfrak{S}_{n}$ denote the symmetric group of permutations on $[n]:=\{1,2,\ldots,n\}$.
We treat permutations as words on distinct letters by writing them in one-line notation; 
that is, if  $\sigma\in\mathfrak{S}_{n}$,
then we write $\sigma=\sigma_{1}\sigma_{2}\ldots\sigma_{n}$ where 
$\sigma_{i}=\sigma(i)$ for all $i\in[n]$.
Given a permutation $\sigma=\sigma_{1}\sigma_{2}\ldots\sigma_{n}\in\mathfrak{S}_{n}$,
a position $i$, $1\leq i\leq n-1$, is called a \emph{descent} of $\sigma$ if $\sigma_{i}>\sigma_{i+1}$.
Let $\textsf{Des}(\sigma)$ be the set  of descents of $\sigma$,
and let $\textsf{des}(\sigma)=|\textsf{Des}(\sigma)|$,
where $|\cdot|$ indicates cardinality.
A position $i$, $1\leq i\leq n$, is called 
an \emph{excedance} of $\sigma$ if $\sigma_{i}>i$.
Let $\textsf{Exc}(\sigma)$ be the set of excedances of $\sigma$,
and let $\textsf{exc}(\sigma)=|\textsf{Exc}(\sigma)|$.
It is well known that $\textsf{des}$ and $\textsf{exc}$ are equidistributed over $\mathfrak{S}_{n}$,
and a permutation statistic equidistributed with them is said to be \emph{Eulerian}.

Let $\sigma=\sigma_{1}\sigma_{2}\ldots\sigma_{n}\in\mathfrak{S}_{n}$.
A pair $(i,j)$ of positions  is called an \emph{inversion} of $\sigma$ if $i<j$ and $\sigma_{i}>\sigma_{j}$. 
Let $\textsf{Inv}(\sigma)$ be the set of inversions of $\sigma$,
and let $\textsf{inv}(\sigma)=|\textsf{Inv}(\sigma)|$.
Define the \emph{major index} of $\sigma$, denoted by $\textsf{maj}(\sigma)$,  to be  the sum of the descents of $\sigma$, that is,
$$\textsf{maj}(\sigma)=\sum_{i\in\textsf{Des}(\sigma)}i.$$
In \cite{MacMahon-1916},  MacMahon showed that \textsf{inv} and \textsf{maj} are equidistributed over $\mathfrak{S}_{n}$, and that
\begin{align*} 
	\sum_{\sigma\in\mathfrak{S}_{n}}q^{\textsf{inv}(\sigma)}=\sum_{\sigma\in\mathfrak{S}_{n}}q^{\textsf{maj}(\sigma)}=[n]_{q}!,
\end{align*}
where  
$[n]_{q}!=[n]_{q}[n-1]_{q}\ldots[1]_{q}$
with 
$[n]_{q}=1+q+\cdots+q^{n-1}.$
In his honor, any permutation statistic with this distribution over $\mathfrak{S}_{n}$ is said to be \emph{Mahonian}.

Denert \cite{Denert-1990} introduced an excedance-based Mahonian statistic;
in this paper, we adopt an equivalent definition due to  Foata and Zeilberger \cite{Foata-1990}.
Let $\sigma=\sigma_{1}\sigma_{2}\ldots \sigma_{n}\in\mathfrak{S}_{n}$.
If $i$ is an excedance,
we call $\sigma_{i}$ an \emph{excedance-letter}.
Let $\textsf{\scriptsize{EXCL}}(\sigma)$
be the subsequence of $\sigma$ that consists of the excedance-letters,
and let $\textsf{\scriptsize{NEXCL}}(\sigma)$ be the subsequence of $\sigma$  that consists of the remaining letters (i.e., the non-excedance-letters).
Define \emph{Denert's statistic} of $\sigma$, denoted by $\textsf{den}(\sigma)$, as
$$\textsf{den}(\sigma)=\sum_{i\in\textsf{Exc}(\sigma)}i+\textsf{inv}(\textsf{\scriptsize{EXCL}}(\sigma))+
\textsf{inv}(\textsf{\scriptsize{NEXCL}}(\sigma)).$$
For example, if  $\sigma=715492638$, 
we have 
$\textsf{den}(\sigma)=1+3+5+\textsf{inv}(759)+\textsf{inv}(142638)=13$.

A pair of permutation statistics that is equidistributed with $(\textsf{des},\textsf{maj})$ is said to be \emph{Euler--Mahonian}.
Denert \cite{Denert-1990} conjectured that the pair $(\textsf{exc},\textsf{den})$ is Euler--Mahonian.
The first proof of Denert's conjecture was given by Foata and Zeilberger \cite{Foata-1990},
and a direct bijective proof was later provided by Han \cite{Han-1990-direct}.
In Section \ref{new proof of Denert's conjecture}, we present another bijective proof.
\subsection{$r$-Euler--Mahonian statistics} 
Throughout the paper, let $r\geq1$ be an integer.
Let $\sigma=\sigma_{1}\sigma_{2}\ldots\sigma_{n}\in\mathfrak{S}_{n}$. 
A position $i$, $1\leq i\leq n-1$, is called an \emph{$r$-descent} (or \emph{$r$-gap descent}) of $\sigma$
if $\sigma_{i}\geq\sigma_{i+1}+r$.
Let $r\textsf{Des}(\sigma)$ denote the set  of $r$-descents of $\sigma$,
and let $r\textsf{des}(\sigma)=|r\textsf{Des}(\sigma)|$.
A position $i$, $1\leq i\leq n$, is called 
an \emph{$r$-excedance} (or \emph{$r$-gap excedance}) of $\sigma$ if $\sigma_{i}\geq i+r$.
Let $r\textsf{Exc}(\sigma)$ denote the set of $r$-excedances of $\sigma$,
and let $r\textsf{exc}(\sigma)=|r\textsf{Exc}(\sigma)|$.
Clearly, when $r=1$, $r\textsf{des}$ and $r\textsf{exc}$ reduce to \textsf{des} and \textsf{exc}, respectively.
The statistics $r\textsf{des}$ and $r\textsf{exc}$ are equidistributed over $\mathfrak{S}_{n}$,
and any permutation statistic equidistributed with them is said to be $r$-\emph{Eulerian}.

We now review two interpolating Mahonian statistics:
the $r$-major index, introduced by Rawlings \cite{Rawlings-1981},
and the $r$-Denert's statistic, introduced by Han \cite{Han-1991-thesis}.

Let $\sigma=\sigma_{1}\sigma_{2}\ldots \sigma_{n}\in\mathfrak{S}_{n}$.
The \emph{$r$-major index} of $\sigma$, 
denoted by $r\textsf{maj}(\sigma)$, is defined as
\begin{align*}
	r\textsf{maj}(\sigma)=\sum_{i\in r\textsf{Des}(\sigma)}i +|r\textsf{Inv}(\sigma)|,
\end{align*}
where $r\textsf{Inv}(\sigma)=\{(i,j)\in\textsf{Inv}(\sigma): \sigma_{i}<\sigma_{j}+r\}.$
Clearly, $r\textsf{maj}$ reduces to $\textsf{maj}$  when $r=1$ and to $\textsf{inv}$ when $r\geq n$,
thus the family of $r\textsf{maj}$ interpolates between \textsf{maj} and \textsf{inv}.

Let $\sigma=\sigma_{1}\sigma_{2}\ldots \sigma_{n}\in\mathfrak{S}_{n}$. 
If $i$ is an $r$-excedance,
we call $\sigma_{i}$ an \emph{$r$-excedance-letter}.
Let $r\textsf{\scriptsize{EXCL}}(\sigma)$ be the subsequence of $\sigma$ that consists of the $r$-excedance-letters,
and let $r\textsf{\scriptsize{NEXC}}(\sigma)$ be the subsequence of $\sigma$ that consists of the remaining letters (i.e., the non-$r$-excedance-letters).
Define the \emph{$r$-Denert's statistic} of $\sigma$, denoted by $r\textsf{den}(\sigma)$, to be 
\begin{align*}
	r\textsf{den}(\sigma)=\sum_{i\in r\textsf{Exc}(\sigma)}(i+r-1)+\textsf{inv}(r\textsf{\scriptsize{EXCL}}(\sigma))+\textsf{inv}(r\textsf{\scriptsize{NEXCL}}(\sigma)).
\end{align*}
Clearly, $r\textsf{den}$ reduces to $\textsf{den}$  when $r=1$ and to $\textsf{inv}$ when $r\geq n$.

In \cite{Liu-2024}, we proved that $(r\textsf{des},r\textsf{maj})$ and $(r\textsf{exc},r\textsf{den})$ are equidistributed over $\mathfrak{S}_{n}$.
Any pair of permutation statistics equidistributed with these pairs is said to be \emph{$r$-Euler--Mahonian}.
In the remainder of this subsection, we recall other $r$-Euler--Mahonian statistics.

Let $\sigma=\sigma_{1}\sigma_{2}\ldots \sigma_{n}\in\mathfrak{S}_{n}$. 
An excedance $i$ of $\sigma$ is called an \emph{$r$-level excedance} of $\sigma$ if  $i\geq r$.
Let $\textsf{Exc}_{r}(\sigma)$ and $\textsf{exc}_{r}(\sigma)$ denote the set and the number of $r$-level excedances of $\sigma$, respectively. That is,
$$\textsf{Exc}_{r}(\sigma)=\{i\in[n]:\sigma_{i}>i,~i\geq r\}\text{~~and~~}\textsf{exc}_{r}(\sigma)=|\textsf{Exc}_{r}(\sigma)|.$$
For example, if $\sigma=2715643$, we have  $\textsf{exc}_{1}=\textsf{exc}=|\{1,2,4,5\}|=4$,
$\textsf{exc}_{2}(\sigma)=|\{2,4,5\}|=3$,
$\textsf{exc}_{3}(\sigma)=\textsf{exc}_{4}(\sigma)=|\{4,5\}|=2$,
$\textsf{exc}_{5}(\sigma)=|\{5\}|=1$,
$\textsf{exc}_{6}(\sigma)=0$.

Given a permutation  $\sigma=\sigma_{1}\sigma_{2}\ldots \sigma_{n}\in\mathfrak{S}_{n}$, 
an excedance-letter $\sigma_{i}$ is called an \emph{$r$-level excedance-letter} of $\sigma$ if $\sigma_{i}\geq r$.
The position $i$ is then referred to as an \emph{$r$-level excedance-letter position}.
Let $\textsf{Exclp}_{r}(\sigma)$ denote the set of $r$-level excedance-letter positions of $\sigma$, that is,
$$\textsf{Exclp}_{r}(\sigma)=\{i\in[n]:\sigma_{i}>i,~\sigma_{i}\geq r\}.$$
Let $\textsf{\scriptsize{EXCL}}_{r}(\sigma)$ be the subsequence of $\sigma$  that consists of the $r$-level excedance-letters,
and let $\textsf{\scriptsize{NEXCL}}_{r}(\sigma)$ be the subsequence of $\sigma$  that consists of the remaining letters 
(i.e., the non-$r$-level excedance-letters). 
Define the \emph{$r$-level Denert's statistic} of $\sigma$, denoted by $\textsf{den}_{r}(\sigma)$, to be 
$$
	\textsf{den}_{r}(\sigma)=\sum_{i\in\textsf{Exclp}_{r}(\sigma)}i+\textsf{inv}(\textsf{\scriptsize{EXCL}}_{r}(\sigma))+\textsf{inv}(\textsf{\scriptsize{NEXCL}}_{r}(\sigma)).
$$
For example, let $\sigma=2715643$, we have $\textsf{\scriptsize{EXCL}}(\sigma)=2756$.
If $r=3$, we have $\textsf{\scriptsize{EXCL}}_{3}(\sigma)=756$ and $\textsf{Exclp}_{3}(\sigma)=\{2,4,5\}$, then 
$\textsf{den}_{3}(\sigma)=2+4+5+\textsf{inv}(756)+\textsf{inv}(2143)=15.$
If $r=6$, we have $\textsf{\scriptsize{EXCL}}_{6}(\sigma)=76$ and $\textsf{Exclp}_{6}(\sigma)=\{2,5\}$, then 
$\textsf{den}_{6}(\sigma)=2+5+\textsf{inv}(76)+\textsf{inv}(21543)=12.$

Note that if $i$ is an $r$-level excedance of $\sigma$,
then $i$ is an $r$-level excedance-letter position of $\sigma$,
but not vice versa.
Therefore, 
$\textsf{Exc}_{r}(\sigma)\subseteq\textsf{Exclp}_{r}(\sigma)
\text{~~and~~} \textsf{exc}_{r}(\sigma)\leq|\textsf{Exclp}_{r}(\sigma)|.$

Huang, Lin and Yan \cite{Yan-2025} proved that the pair $(\textsf{exc}_{r},\textsf{den}_{r})$ is $r$-Euler--Mahonian,
thereby confirming a conjecture proposed in \cite{Liu-2024}.
\begin{theorem}[Huang, Lin and Yan \cite{Yan-2025}] 
The pair $(\emph{\textsf{exc}}_{r},\emph{\textsf{den}}_{r})$ is $r$-Euler--Mahonian.
\end{theorem}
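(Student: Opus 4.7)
The plan is to deduce the theorem as a corollary of a suitable extension of the bijection constructed earlier in the paper. Specifically, I would build an explicit bijection $\Phi_r : \mathfrak{S}_n \to \mathfrak{S}_n$ satisfying
\[
\bigl(r\textsf{des}(\sigma),\, r\textsf{maj}(\sigma)\bigr) \;=\; \bigl(\textsf{exc}_r(\Phi_r(\sigma)),\, \textsf{den}_r(\Phi_r(\sigma))\bigr)
\]
for every $\sigma \in \mathfrak{S}_n$. Since $(r\textsf{des}, r\textsf{maj})$ is $r$-Euler--Mahonian by \cite{Liu-2024}, the theorem would follow immediately.

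The starting point is the bijection $\Phi_1 : \mathfrak{S}_n \to \mathfrak{S}_n$ from Section~\ref{new proof of Denert's conjecture} that sends $(\textsf{des}, \textsf{maj})$ to $(\textsf{exc}, \textsf{den})$. I would promote $\Phi_1$ to $\Phi_r$ via a recursive insertion procedure: process the letters $1, 2, \ldots, n$ one at a time and, at each step, choose the image position so that $r$-descents in the preimage are recorded as $r$-level excedances in the image, while the $r$-inversion count $|r\textsf{Inv}(\sigma)|$ is transferred into inversions inside the subsequences $\textsf{\scriptsize{EXCL}}_r(\Phi_r(\sigma))$ and $\textsf{\scriptsize{NEXCL}}_r(\Phi_r(\sigma))$. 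Verifying correctness then reduces to the two identities $r\textsf{des}(\sigma) = \textsf{exc}_r(\Phi_r(\sigma))$ and $r\textsf{maj}(\sigma) = \textsf{den}_r(\Phi_r(\sigma))$; the former should follow from a direct bookkeeping on the insertion rule, whereas the latter---which requires matching $\sum_{i \in r\textsf{Des}(\sigma)} i + |r\textsf{Inv}(\sigma)|$ with $\sum_{i \in \textsf{Exclp}_r(\Phi_r(\sigma))} i + \textsf{inv}(\textsf{\scriptsize{EXCL}}_r) + \textsf{inv}(\textsf{\scriptsize{NEXCL}}_r)$---is the more delicate one.

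The hardest step will be handling an asymmetry absent from the analogous statements for $(\textsf{exc}, \textsf{den})$ and for $(r\textsf{exc}, r\textsf{den})$: the statistic $\textsf{exc}_r$ is defined by a \emph{position} constraint ($i \geq r$), whereas $\textsf{den}_r$ is defined by a \emph{letter} constraint ($\sigma_i \geq r$), so the excedances counted by $\textsf{exc}_r$ need not coincide with those whose positions are summed in $\textsf{den}_r$. This means that $\Phi_r$ must interact with the insertion position and the value of the incoming letter in qualitatively different ways. I anticipate that a case split in the insertion rule depending on whether the incoming letter exceeds $r$, combined with a careful analysis of how such a letter affects existing positions above and below the threshold $r$, will be needed to simultaneously respect both $r$-level conditions; small examples at $r = 2$, $n = 4$ should be worked out in advance to calibrate the rule before the general inductive verification.
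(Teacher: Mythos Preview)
Your proposal is a research outline rather than a proof: you never actually define $\Phi_r$, and ``choose the image position so that $r$-descents become $r$-level excedances'' merely restates the goal. There is also a misreading of Section~\ref{new proof of Denert's conjecture}: it does not construct a bijection $\Phi_1:\mathfrak{S}_n\to\mathfrak{S}_n$, but rather the Rawlings-type insertion $\phi_n^{\text{den}}:\mathfrak{S}_{n-1}\times\{0,\ldots,n-1\}\to\mathfrak{S}_n$, and equidistribution is deduced from the recursive criterion~(\ref{eq-Euler--Mahonian}), not from a global $\Phi_1$.

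The position-versus-letter asymmetry you correctly flag is indeed the crux, but the paper does not handle it by a case split inside the insertion. In proving Theorem~\ref{mian-general-theorem} (of which the present statement is the case $g=1$, $h=\ell=r$) the recursive bijection $\phi_{g,h,n}^{\text{den}}$ is built so as to control the \emph{set} $g\textsf{Exclp}_h$ of excedance-letter positions, which is exactly the set appearing in the definition of $g\textsf{den}_h$; so there is no asymmetry inside the bijection at all. The bridge to $g\textsf{exc}_\ell$ is made \emph{afterwards} via the one-line Lemma~\ref{Lemma_Exc_Exclp}, namely $g\textsf{Exc}_\ell=\{i\in g\textsf{Exclp}_h:i\ge\ell\}$ whenever $h\le g+\ell$. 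Because Theorem~\ref{Thm-bijection-denl} records the full set $g\textsf{Exclp}_h(w)$ (not just its cardinality), and the newly created position $k_d$ is the $d$-th smallest element of the complement, whether $k_d\ge\ell$ is read off from the count $|A_{g,h,\ell}(\sigma)|+|B_{g,h,\ell}(\sigma)|=\ell-1$, yielding the Rawlings criterion~(\ref{eq-r-Euler--Mahonian}) with threshold $g\textsf{exc}_\ell(\sigma)+g+\ell-2$. This decoupling of the position and letter constraints is the idea your plan is missing; a direct case split on whether the incoming letter exceeds $r$ would have to reproduce the same bookkeeping in a far less transparent form, and nothing in your outline indicates how that would close.
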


Let us generalize the above notions (see \cite{Liu-2024}, where slightly different notations are used).
Let $g,\ell\geq1$, which we assume throughout the paper.
Given  a permutation  $\sigma=\sigma_{1}\sigma_{2}\ldots \sigma_{n}\in\mathfrak{S}_{n}$,
a position $i$, $1\leq i\leq n$, is called a  \emph{$g$-gap $\ell$-level excedance} of $\sigma$ if $\sigma_{i}\geq i+g$ and $i\geq \ell$.
Let $g\textsf{Exc}_{\ell}(\sigma)$ and $g\textsf{exc}_{\ell}(\sigma)$ denote  the set and the number of $g$-gap $\ell$-level excedances of $\sigma$, respectively.
That is,
\vspace{-8pt}
$$g\textsf{Exc}_{\ell}(\sigma)=\{i\in[n]:\sigma_{i}\geq i+g ,~i\geq \ell\}
\text{~~and~~}g\textsf{exc}_{\ell}(\sigma)=|g\textsf{Exc}_{\ell}(\sigma)|.$$

\vspace{-8pt}
Let $\sigma=\sigma_{1}\sigma_{2}\ldots \sigma_{n}\in\mathfrak{S}_{n}$.
The letter $\sigma_{i}$ is called a \emph{$g$-gap $\ell$-level excedance-letter} of $\sigma$ if $\sigma_{i}\geq i+g$ and $\sigma_{i}\geq \ell$.
The position $i$ is then referred to as a \emph{$g$-gap $\ell$-level excedance-letter position} of $\sigma$.
Let $g\textsf{Exclp}_{\ell}(\sigma)$ be the set of $g$-gap $\ell$-level  excedance-letter positions of $\sigma$; i.e.,
$$g\textsf{Exclp}_{\ell}(\sigma)=\{i\in[n]:\sigma_{i}\geq i+g,~ \sigma_{i}\geq \ell\}.$$
Let $g\textsf{\scriptsize{EXCL}}_{\ell}(\sigma)$ be the subsequence of $\sigma$  that consists of the $g$-gap $\ell$-level excedance-letters,
and let $g\textsf{\scriptsize{NEXC}}_{\ell}(\sigma)$ be the subsequence of $\sigma$  that consists of the remaining letters
(i.e., the non-$g$-gap $\ell$-level excedance-letters). 
Define the \emph{$g$-gap $\ell$-level Denert's statistic} of $\sigma$, denoted by $g\textsf{den}_{\ell}(\sigma)$, as 
\vspace{-8pt}
$$
g\textsf{den}_{\ell}(\sigma)=\sum_{i\in g\textsf{Exclp}_{\ell}(\sigma)}(i+g-1)+\textsf{inv}(g\textsf{\scriptsize{EXCL}}_{\ell}(\sigma))+\textsf{inv}(g\textsf{\scriptsize{NEXCL}}_{\ell}(\sigma)).
$$

\vspace{-8pt}
Huang and Yan \cite{Yan-2025-2} proved that  $(g\textsf{exc}_{\ell},g\textsf{den}_{\ell})$ is $(g+\ell-1)$-Euler--Mahonian, thereby confirming a conjecture proposed in \cite{Liu-2024}.
(By setting $\ell=1$ and $g=1$ in this result,
one obtains that $(r\textsf{exc},r\textsf{den})$ and $(\textsf{exc}_{r},\textsf{den}_{r})$ are $r$-Euler--Mahonian,
respectively.)
Moreover, they obtained an instructive result showing that $(g\textsf{exc}_{\ell},g\textsf{den}_{g+\ell})$ is also $(g+\ell-1)$-Euler--Mahonian.	
\begin{theorem}[Huang and Yan \cite{Yan-2025-2}]\label{thm-Huang-Yan} 
	Let $g,\ell\geq1$. We have
	
	(1) The pair $(g\emph{\textsf{exc}}_{\ell},g\emph{\textsf{den}}_{\ell})$ is $(g+\ell-1)$-Euler--Mahonian.
	
	(2) The pair $(g\emph{\textsf{exc}}_{\ell},g\emph{\textsf{den}}_{g+\ell})$ is $(g+\ell-1)$-Euler--Mahonian.	
\end{theorem}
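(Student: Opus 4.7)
The plan is to prove both parts via explicit bijections of $\mathfrak{S}_{n}$ onto itself, extending in two directions the bijection constructed earlier in the paper to prove that $(\textsf{exc}, \textsf{den})$ is Euler--Mahonian. Setting $r = g + \ell - 1$, I aim to produce maps $\Phi_{1}, \Phi_{2} : \mathfrak{S}_{n} \to \mathfrak{S}_{n}$ such that $(r\textsf{des}, r\textsf{maj})(\Phi_{1}(\sigma)) = (g\textsf{exc}_{\ell}, g\textsf{den}_{\ell})(\sigma)$ and $(r\textsf{des}, r\textsf{maj})(\Phi_{2}(\sigma)) = (g\textsf{exc}_{\ell}, g\textsf{den}_{g+\ell})(\sigma)$ for every $\sigma \in \mathfrak{S}_{n}$.

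The strategy follows the template already used for the single-parameter pair $(\textsf{exc}_{r}, \textsf{den})$: the bijection is built by an inductive insertion of letters, with each insertion contributing controllably to both coordinates of the source statistic and to both coordinates of the target $(r\textsf{des}, r\textsf{maj})$. To adapt to the general situation, I would reinterpret the rule that decides whether a newly inserted letter joins the excedance-letter subsequence. For $\Phi_{1}$, the rule becomes: $\sigma_{i}$ joins $g\textsf{\scriptsize{EXCL}}_{\ell}(\sigma)$ precisely when $\sigma_{i} \geq i + g$ and $\sigma_{i} \geq \ell$, while the position-sum contributes $i + g - 1$ rather than $i$. For $\Phi_{2}$, the letter threshold is raised to $\sigma_{i} \geq g + \ell$, which (in the range $i \geq \ell$) is equivalent to a purely letter-based test and so yields a cleaner insertion rule.

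Part (2) may be deduced more economically from part (1) via an auxiliary involution $\psi : \mathfrak{S}_{n} \to \mathfrak{S}_{n}$ that fixes $g\textsf{exc}_{\ell}$ and sends $g\textsf{den}_{\ell}$ to $g\textsf{den}_{g+\ell}$; then $\Phi_{2} = \Phi_{1} \circ \psi$ does the job. A natural candidate for $\psi$ exploits the inclusion $g\textsf{Exclp}_{g+\ell}(\sigma) \subseteq g\textsf{Exclp}_{\ell}(\sigma)$, whose discrepancy is controlled entirely by the positions occupied by letters in the interval $[\ell, g+\ell-1]$, so the involution need only reshuffle this sparse layer.

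The main obstacle will be the bookkeeping in the inductive verification that $(r\textsf{des}, r\textsf{maj})(\Phi_{i}(\sigma))$ really equals the claimed source pair. Concretely, one must show that the increment of $r\textsf{maj}$ caused by inserting a new letter into the target word equals the increment of $g\textsf{den}_{\ell}$ (respectively $g\textsf{den}_{g+\ell}$) on the source side, correctly accounting for the shift $g - 1$ per excedance-letter position and for the way the two $\textsf{inv}$ subsequence contributions change when a letter is added. As in the single-parameter case, the right tracking should emerge from choosing the insertion order carefully (most likely by decreasing letter value) and labeling each available slot in the partial target word by the type of contribution it produces; the resulting case analysis on the insertion site versus the value of the inserted letter is the combinatorial heart of the argument.
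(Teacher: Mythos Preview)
Your plan is plausible in outline, but it diverges from the paper's route in an important way and carries a speculative step that the paper avoids entirely.

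The paper does \emph{not} construct direct bijections $\Phi_{i}:\mathfrak{S}_{n}\to\mathfrak{S}_{n}$ carrying $(g\textsf{exc}_{\ell},g\textsf{den}_{\bullet})$ to $(r\textsf{des},r\textsf{maj})$. Instead it uses the recursive characterization of $r$-Euler--Mahonian pairs (equation~(3.2) in the paper): it suffices to build, for each $n$, an insertion bijection
\[
\phi_{g,h,n}^{\text{den}}:\mathfrak{S}_{n-1}\times\{0,1,\ldots,n-1\}\longrightarrow\mathfrak{S}_{n}
\]
that increments $g\textsf{den}_{h}$ by exactly $c$ and either preserves $g\textsf{exc}_{\ell}$ or raises it by one, with the threshold at $c=g\textsf{exc}_{\ell}(\sigma)+g+\ell-2$. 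This is Theorem~4.1 together with Lemma~4.1. The latter lemma---that $g\textsf{Exc}_{\ell}(\pi)=\{i\in g\textsf{Exclp}_{h}(\pi):i\geq\ell\}$ whenever $h\leq g+\ell$---is the pivotal observation: it lets a \emph{single} insertion map, built only with reference to $g\textsf{Exclp}_{h}$, serve simultaneously for every $h\in[1,g+\ell]$. In particular both $h=\ell$ and $h=g+\ell$ fall out at once from Theorem~1.4, so parts~(1) and~(2) are handled by one construction and one argument.

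By contrast, your plan treats the two parts separately, proposes to compose insertion maps into global bijections of $\mathfrak{S}_{n}$, and for part~(2) floats an involution $\psi$ that would fix $g\textsf{exc}_{\ell}$ while swapping $g\textsf{den}_{\ell}$ and $g\textsf{den}_{g+\ell}$. The observation that the discrepancy $g\textsf{Exclp}_{\ell}\setminus g\textsf{Exclp}_{g+\ell}$ involves only letters in $[\ell,g+\ell-1]$ at positions $<\ell$ is correct, but turning this into an explicit $\psi$ that controls both $\textsf{inv}$ pieces of $g\textsf{den}$ is work you have not done, and the paper never needs such a map. Your approach could in principle be carried out, but the paper's route is both more economical (one bijection, one proof, all $h$) and avoids the speculative involution entirely.
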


\subsection{The main results}
In the spirit of the map $\gamma_{n,g,\ell}$ of Huang and Yan \cite{Yan-2025-2} and the map $\beta_{r,n}$ of Huang, Lin and Yan \cite{Yan-2025},
we provide a new bijective proof of the classical result that $(\textsf{exc},\textsf{den})$ is Euler--Mahonian.
Using this bijection, we further prove that
$(\textsf{exc}_{r},\textsf{den})$ is $r$-Euler--Mahonian,
which is somewhat surprising since  the second statistic, \textsf{den}, is independent of $r$.

\begin{theorem}\label{mian-theorem-(excr,den)}
Let $r\geq1$. The pair  $(\emph{\textsf{exc}}_{r},\emph{\textsf{den}})$ is $r$-Euler--Mahonian.
\end{theorem}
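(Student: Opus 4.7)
The plan is to construct, for each $r \geq 1$, an explicit bijection $\phi_r \colon \mathfrak{S}_n \to \mathfrak{S}_n$ satisfying
$$(\textsf{exc}_r(\sigma), \textsf{den}(\sigma)) = (r\textsf{des}(\phi_r(\sigma)), r\textsf{maj}(\phi_r(\sigma)))$$
for every $\sigma \in \mathfrak{S}_n$. At $r = 1$ this specialises to the bijection $\phi$ that will have been produced in Section \ref{new proof of Denert's conjecture} to reprove the classical Euler--Mahonian property of $(\textsf{exc}, \textsf{den})$.

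I would define $\phi_r$ by induction on $n$, in the spirit of the maps $\beta_{r,n}$ of Huang--Lin--Yan and $\gamma_{n,g,\ell}$ of Huang--Yan. Let $\sigma^-$ denote the permutation in $\mathfrak{S}_{n-1}$ obtained by deleting the largest letter $n$ from $\sigma$, and let $p$ be the position of $n$ in $\sigma$. Set $\tau^- := \phi_r(\sigma^-)$, and obtain $\tau = \phi_r(\sigma)$ by inserting a suitably shifted ``new largest letter'' into $\tau^-$ at an index $q = q(p, \tau^-, r)$. The rule for $q$ and for the value of the inserted letter is chosen so that: (i) the insertion creates no descent when $p = n$, matching the fact that $n$ is then not an excedance; (ii) it creates a descent of gap at least $r$ precisely when $p \geq r$, matching the $r$-level condition; and (iii) it shifts the old $r$-descents and $r$-inversions of $\tau^-$ in a predictable way.

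Properties (i)--(ii) immediately give $\textsf{exc}_r(\sigma) = r\textsf{des}(\tau)$ by induction on $n$. For the second coordinate, I would track the increments
$$\textsf{den}(\sigma) - \textsf{den}(\sigma^-) \quad \text{and} \quad r\textsf{maj}(\tau) - r\textsf{maj}(\tau^-)$$
and show that they coincide termwise: the new excedance-position contribution $p$ on the left pairs with the new descent-position contribution $q$ on the right, while the inversions added to the subwords $\textsf{\scriptsize{EXCL}}(\sigma)$ and $\textsf{\scriptsize{NEXCL}}(\sigma)$ pair with the combination of the new small inversions contributing to $|r\textsf{Inv}(\tau)|$ and the induced shifts of the old descent positions on the $\tau$-side.

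The main obstacle is precisely this termwise matching of the second coordinate, because $\textsf{den}$ is $r$-independent while $r\textsf{maj}$ carries an explicit small-inversion correction $|r\textsf{Inv}|$ that has no direct analogue on the Denert side. Designing the insertion so that the inversion contributions hidden inside $\textsf{den}$ absorb this correction requires a careful case analysis in the three regimes $p < r$, $r \leq p < n$, and $p = n$; verifying this absorption is the crux of the argument, and it is where the combinatorial insights from the $\beta_{r,n}$ and $\gamma_{n,g,\ell}$ constructions become essential. Once the inductive step is established, the theorem follows by iterating from the trivial base case $n = 1$.
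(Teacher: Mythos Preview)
Your proposal is not a proof but a plan, and its central step is left open: you acknowledge that ``verifying this absorption is the crux of the argument'' without carrying it out. The rule for the insertion index $q$ is never specified, so the claimed properties (i)--(iii) cannot be checked, and the matching of the $\textsf{den}$-increment with the $r\textsf{maj}$-increment remains an assertion. In short, you have identified what would have to be proved rather than proving it.

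More importantly, you have missed the paper's key idea, which makes all of this unnecessary. The paper does \emph{not} construct an $r$-dependent bijection to $(r\textsf{des},r\textsf{maj})$. Instead it reuses the single, $r$-independent insertion bijection $\phi_{n}^{\text{den}}:\mathfrak{S}_{n-1}\times\{0,\ldots,n-1\}\to\mathfrak{S}_{n}$ from Section~\ref{new proof of Denert's conjecture}. The point is that Theorem~\ref{Thm-New-bijection} records not merely $\textsf{exc}$ but the \emph{set} $\textsf{Exc}$ of the image: when $c>\textsf{exc}(\sigma)$ a new excedance is created at $k_{d}$, the $d$-th smallest non-excedance of $\sigma$. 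Since these $k_{d}$ are filled in increasing order, one reads off immediately when the new excedance first satisfies $k_{d}\geq r$, namely once $c>\textsf{exc}(\sigma)+|B_{r}(\sigma)|=\textsf{exc}_{r}(\sigma)+r-1$. This yields the recursion (\ref{eq-r-Euler--Mahonian}) for $(\textsf{exc}_{r},\textsf{den})$ with no further work and no reference to $r\textsf{maj}$ at all. Your route, by contrast, forces you to confront the $|r\textsf{Inv}|$ correction term head-on---precisely the obstacle you flag---whereas the paper sidesteps it entirely by staying on the Denert side throughout.
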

\begin{remark}
	\emph{We note that the result does \emph{not} hold for the ``gap'' case---that is, $(r\textsf{exc},\textsf{den})$ is \emph{not}  $r$-Euler--Mahonian. }
\end{remark}
\newpage
By extending our bijection, we establish the following more general result.
\begin{theorem}\label{mian-general-theorem}
	Let $g,\ell\geq1$.
	The pair $(g\emph{\textsf{exc}}_{\ell},g\emph{\textsf{den}}_{h})$ is $(g+\ell-1)$-Euler--Mahonian for all 
	$1\leq h\leq g+\ell$.
\end{theorem}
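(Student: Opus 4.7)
The plan is to reduce Theorem \ref{mian-general-theorem} to the case $h = \ell$, which is Theorem \ref{thm-Huang-Yan}(1). Concretely, for each fixed $h \in [1, g+\ell]$, I would construct a bijection $\Psi_h : \mathfrak{S}_n \to \mathfrak{S}_n$ that preserves $g\textsf{exc}_\ell$ and transports $g\textsf{den}_h$ to $g\textsf{den}_\ell$. Once established, this immediately yields the joint equidistribution of $(g\textsf{exc}_\ell, g\textsf{den}_h)$ with $(g\textsf{exc}_\ell, g\textsf{den}_\ell)$, and the conclusion follows from Theorem \ref{thm-Huang-Yan}(1). An alternative target would be the pair $((g+\ell-1)\textsf{exc}, (g+\ell-1)\textsf{den})$ from \cite{Liu-2024}, but reducing directly to the $h = \ell$ case avoids reworking the passage to the reference Euler--Mahonian pair.

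The construction of $\Psi_h$ will extend the bijection used to prove Theorem \ref{mian-theorem-(excr,den)} (which itself generalizes the bijections $\gamma_{n,g,\ell}$ of \cite{Yan-2025-2} and $\beta_{r,n}$ of \cite{Yan-2025}). A key preliminary observation is that for every $1 \leq h \leq g+\ell$, each $g$-gap $\ell$-level excedance position $i$ satisfies $\sigma_i \geq i + g \geq \ell + g \geq h$, so $g\textsf{Exc}_\ell(\sigma) \subseteq g\textsf{Exclp}_h(\sigma)$. Hence the ``extra'' indices in $g\textsf{Exclp}_h \setminus g\textsf{Exc}_\ell$ correspond exactly to letters $\sigma_i$ with $i < \ell$, $\sigma_i \geq i+g$, and $\sigma_i \geq h$; these are the positions where $g\textsf{den}_h$ and $g\textsf{den}_\ell$ classify letters differently. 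The plan is to define $\Psi_h$ inductively by processing $\sigma$ letter-by-letter, maintaining two growing subwords that record $\textsf{inv}(g\textsf{EXCL}_{\bullet})$ and $\textsf{inv}(g\textsf{NEXCL}_{\bullet})$, and designing the insertion rule so that the positional contributions $\sum_{i \in g\textsf{Exclp}_h}(i+g-1)$ and $\sum_{i \in g\textsf{Exclp}_\ell}(i+g-1)$ are reconciled step-by-step, without disturbing the $g$-gap $\ell$-level excedance structure.

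The main obstacle will be verifying that the letter migrations between $g\textsf{EXCL}_h$ and $g\textsf{NEXCL}_h$ (relative to $g\textsf{EXCL}_\ell$ and $g\textsf{NEXCL}_\ell$) are exactly compensated by the change in the positional contribution under $\Psi_h$. A promising approach is to decompose $\Psi_h$ as a composition of elementary bijections, each shifting a single boundary letter between the two subsequences and correspondingly adjusting one position, so that the delicate bookkeeping reduces to a one-letter verification. The subtlest case occurs when the moved letter has value close to $h$ and sits at a position close to $\ell$, since there the $g$-gap condition $\sigma_i \geq i+g$, the threshold $\sigma_i \geq h$, and the level condition $i \geq \ell$ interact and threaten to break the invariant $g\textsf{exc}_\ell \circ \Psi_h = g\textsf{exc}_\ell$. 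A careful case analysis (split according to whether the inserted letter is a $g$-gap $\ell$-level excedance-letter, a $g$-gap $h$-level excedance-letter of either kind, or neither) together with induction on $n$ should settle this and complete the proof.
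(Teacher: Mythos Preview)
Your plan takes a genuinely different route from the paper, and it contains a concrete error that would derail the construction as stated.

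The paper does \emph{not} reduce to Theorem~\ref{thm-Huang-Yan}(1). Instead it extends the insertion bijection $\phi_n^{\mathrm{den}}$ of Section~\ref{section-new-proof} to a map
$\phi_{g,h,n}^{\mathrm{den}}\colon \mathfrak{S}_{n-1}\times\{0,1,\dots,n-1\}\to\mathfrak{S}_n$
that controls $(g\textsf{Exclp}_{h},\,g\textsf{den}_{h})$ directly (Theorem~\ref{Thm-bijection-denl}), and then uses the simple set-theoretic identity $g\textsf{Exc}_{\ell}(\pi)=\{i\in g\textsf{Exclp}_{h}(\pi):i\ge\ell\}$ for $h\le g+\ell$ (Lemma~\ref{Lemma_Exc_Exclp}) to read off the required recursion~(\ref{eq-r-Euler--Mahonian}). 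There is no bijection $\mathfrak{S}_n\to\mathfrak{S}_n$ comparing different values of $h$ at all; each $h$ is handled in one shot.

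The gap in your proposal is the sentence claiming that the indices in $g\textsf{Exclp}_{h}\setminus g\textsf{Exc}_{\ell}$ are ``the positions where $g\textsf{den}_{h}$ and $g\textsf{den}_{\ell}$ classify letters differently.'' This is false: $g\textsf{den}_{\ell}$ is defined via $g\textsf{Exclp}_{\ell}$, not via $g\textsf{Exc}_{\ell}$, so the discrepancy set you need is $g\textsf{Exclp}_{h}\,\triangle\, g\textsf{Exclp}_{\ell}=\{i:\sigma_i\ge i+g,\ \min(h,\ell)\le\sigma_i<\max(h,\ell)\}$, which is governed by the \emph{value} $\sigma_i$, not by the condition $i<\ell$. (Take $g=1$, $\ell=3$, $h=1$, $\sigma=4123$: position $1$ lies in your set but is classified the same way by $\textsf{den}_1$ and $\textsf{den}_3$.) Since the rest of your plan---the design of $\Psi_h$, the inductive insertion, the ``elementary bijections shifting a single boundary letter''---is built on this misidentified set, the construction as sketched would not produce the correct statistic transport. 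You would first need to replace the discrepancy set by the correct one and then supply a concrete rule for $\Psi_h$; nothing in the proposal does either, and it is not clear that the idea of ``shifting one boundary letter at a time'' survives once the discrepancy is indexed by letter values rather than positions.
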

\begin{remark}
	\emph{Small examples show that $(g\textsf{exc}_{\ell},g\textsf{den}_{h})$ is \emph{not}  
	$(g+\ell-1)$-Euler--Mahonian when $h>g+\ell$. 
	Thus, the bound $g+\ell$ indicated by Theorem \ref{thm-Huang-Yan} (2) is tight.
	}
\end{remark} 
Setting $g=h=1$ in Theorem \ref{mian-general-theorem} yields Theorem  \ref{mian-theorem-(excr,den)}.
Setting $h=\ell$ and $h=g+\ell$ 
recovers  Theorem \ref{thm-Huang-Yan} (1) and (2), respectively. 
Finally, setting $g=1$ yields the following result.
\begin{corollary} \label{corr-1}
	Let $r\geq1$.
	The pair $(\emph{\textsf{exc}}_{r},\emph{\textsf{den}}_{h})$ is $r$-Euler--Mahonian for all $1\leq h\leq r+1$.
\end{corollary}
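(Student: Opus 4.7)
The plan is to deduce Corollary~\ref{corr-1} as an immediate specialization of Theorem~\ref{mian-general-theorem}, which I am free to assume. No new bijection or combinatorial argument is needed; the work consists entirely in verifying that setting $g=1$ in the general theorem's definitions reproduces the statistics appearing in the corollary.

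First, I would check that the condition $\sigma_i \geq i + g$ defining a $g$-gap $\ell$-level excedance collapses, at $g=1$, to $\sigma_i > i$, which is the ordinary excedance condition. Consequently $1\textsf{Exc}_\ell(\sigma) = \textsf{Exc}_\ell(\sigma)$ and $1\textsf{Exclp}_\ell(\sigma) = \textsf{Exclp}_\ell(\sigma)$, so the subsequences $1\textsf{\scriptsize{EXCL}}_\ell$ and $1\textsf{\scriptsize{NEXCL}}_\ell$ reduce to $\textsf{\scriptsize{EXCL}}_\ell$ and $\textsf{\scriptsize{NEXCL}}_\ell$, respectively. Inspecting the formula for $g\textsf{den}_h$, the factor $(i+g-1)$ becomes $i$, which is exactly the term appearing in $\textsf{den}_h$. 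Hence $1\textsf{exc}_\ell = \textsf{exc}_\ell$ and $1\textsf{den}_h = \textsf{den}_h$ as permutation statistics on $\mathfrak{S}_{n}$.

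Then I would invoke Theorem~\ref{mian-general-theorem}: it guarantees that $(g\textsf{exc}_\ell, g\textsf{den}_h)$ is $(g+\ell-1)$-Euler--Mahonian whenever $1 \leq h \leq g+\ell$. Substituting $g=1$ and relabeling $\ell$ as $r$, the output is exactly the statement that $(\textsf{exc}_r, \textsf{den}_h)$ is $r$-Euler--Mahonian for all $1 \leq h \leq r+1$, which is Corollary~\ref{corr-1}. Since Theorem~\ref{mian-general-theorem} is assumed, there is no genuine obstacle; the only subtlety is keeping the notational correspondence exact---in particular, checking that the shift by $g-1$ in the Denert formula and the gap in the excedance-set definition both vanish simultaneously at $g=1$, so that no extra terms are introduced and the specialization lands precisely on the pair as defined in the corollary.
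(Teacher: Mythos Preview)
Your proposal is correct and follows exactly the paper's own derivation: the paper states Corollary~\ref{corr-1} immediately after Theorem~\ref{mian-general-theorem} with the one-line justification ``setting $g=1$ yields the following result,'' and your verification that $1\textsf{exc}_{\ell}=\textsf{exc}_{\ell}$ and $1\textsf{den}_{h}=\textsf{den}_{h}$ simply makes that specialization explicit.
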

This paper is organized as follows. 
Section \ref{section-new-proof} presents a new bijective proof of Denert's conjecture. 
Section \ref{Proof of Theorem 1.3} proves Theorem \ref{mian-theorem-(excr,den)} using this bijection, 
and Section \ref{Proof of Theorem 1.4} extends the bijection to prove Theorem \ref{mian-general-theorem}.

\section{A new proof of Denert's conjecture}\label{section-new-proof}
\label{new proof of Denert's conjecture}
In this section, we present a bijective proof of Denert's conjecture,  
which states that $(\textsf{exc},\textsf{den})$ is Euler--Mahonian.
To prove that a pair $(\textsf{stat}_{1},\textsf{stat}_{2})$ of permutation statistics is Euler--Mahonian, 
it suffices to show that there exists a bijection
\vspace{-6pt}
 $$\phi_{n}:\mathfrak{S}_{n-1}\times\{0,1,\ldots,n-1\}\rightarrow\mathfrak{S}_{n}$$
 
\vspace{-10pt}
\noindent such that for $\sigma\in\mathfrak{S}_{n-1}$ and $0\leq c\leq n-1$, 
we have
\begin{equation}\label{eq-Euler--Mahonian}
	\begin{split}
		\textsf{stat}_{1}(\phi_{n}(\sigma,c))&=\left
		\{
		\begin{aligned}
			&\textsf{stat}_{1}(\sigma), \quad\quad\quad\text{if~}0\leq c\leq \textsf{stat}_{1}(\sigma),\\
			&\textsf{stat}_{1}(\sigma)+1,\quad\text{~otherwise,}
		\end{aligned}
		\right.\\				
		\textsf{stat}_{2}(\phi_{n}(\sigma,c))&=\textsf{stat}_{2}(\sigma)+c, 
	\end{split}
\end{equation}
see \cite{Rawlings-1981,Han-1990-direct}. 
The main result of this section is the following theorem.
\begin{theorem}\label{Thm-New-bijection}
	There is a bijection $$\phi_{n}^{\emph{\text{den}}}: \mathfrak{S}_{n-1}\times\{0,1,\ldots,n-1\}\rightarrow\mathfrak{S}_{n},$$
	such that for $\sigma\in\mathfrak{S}_{n-1}$ and $0\leq c\leq n-1$, 
	we have 
\begin{equation}\label{eq-(exc,den)-Exc-den}
	\begin{split}
	\emph{\textsf{Exc}}(\phi_{n}^{\emph{\text{den}}}(\sigma,c))&=
	\left\{
	\begin{aligned}
		&\emph{\textsf{Exc}}(\sigma), \quad\quad\quad\quad~\text{if~}0\leq c\leq \emph{\textsf{exc}}(\sigma),\\
		&\emph{\textsf{Exc}}(\sigma)\cup\{k_{d}\},\quad\text{~otherwise}
	\end{aligned}
	\right.\\
	\emph{\textsf{den}}(\phi_{n}^{\emph{\text{den}}}(\sigma,c))&=\emph{\textsf{den}}(\sigma)+c,
		\end{split}
\end{equation}
where $d=c-\emph{\textsf{exc}}(\sigma)$ and
$k_{d}$ is the $d$-th non-excedance of $\sigma$ from smallest to largest.
\end{theorem}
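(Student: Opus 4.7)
The plan is to construct $\phi_n^{\text{den}}$ explicitly by inserting the largest letter $n$ into $\sigma \in \mathfrak{S}_{n-1}$ according to a rule that depends on $c$, modelled on the maps $\gamma_{n,g,\ell}$ of Huang--Yan and $\beta_{r,n}$ of Huang--Lin--Yan cited in the introduction. The dichotomy in the prescribed excedance set naturally splits the construction into two cases: for $c > \textsf{exc}(\sigma)$ the insertion must create exactly one new excedance, located at the specified position $k_d$, whereas for $c \le \textsf{exc}(\sigma)$ the excedance set must be preserved intact. Once the bijection is defined, the Euler--Mahonian conclusion for $(\textsf{exc},\textsf{den})$ will follow immediately from the general criterion recorded at the beginning of the section.

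For the case $c > \textsf{exc}(\sigma)$, I would place $n$ at position $k_d$ (where $d = c - \textsf{exc}(\sigma)$) and shift the suffix of $\sigma$ one position to the right, applying whatever value adjustment is needed so that the excedance status of each position beyond $k_d$ is preserved. Verifying $\textsf{Exc}(\phi_n^{\text{den}}(\sigma,c)) = \textsf{Exc}(\sigma) \cup \{k_d\}$ then reduces to a position-by-position check. The change in $\textsf{den}$ would be computed as a sum of three contributions: the new summand $k_d$ in the positional sum (from the new excedance at $k_d$); the inversions contributed by $n$ inside $\textsf{EXCL}$, which, since $n$ is the largest letter, are controlled by its relative position in the excedance-letter word; and the variation of $\textsf{inv}(\textsf{NEXCL})$ coming from the displaced letter $\sigma_{k_d}$. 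Collecting these terms should yield exactly $\textsf{den}(\sigma) + c$.

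For the case $c \le \textsf{exc}(\sigma)$, the subcase $c = 0$ is handled by appending $n$ at position $n$, which leaves both $\textsf{Exc}$ and $\textsf{den}$ unchanged. For $1 \le c \le \textsf{exc}(\sigma)$, the plan is to insert $n$ at one of the existing excedance positions and simultaneously cyclically rotate (or similarly rearrange) the letters at the first $c$ excedance positions, designed so that no position switches its excedance status while $\textsf{inv}(\textsf{EXCL})$ increases by exactly $c$. Bijectivity is then established by constructing the inverse: the location of $n$ in $\tau$ together with the local structure of the surrounding excedance-letters determines the case from which $\tau$ originated and allows one to reverse the insertion-plus-rearrangement to recover $(\sigma, c)$ uniquely. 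The main obstacle I anticipate is precisely this Case $1 \le c \le \textsf{exc}(\sigma)$ rearrangement: designing the rotation so that the three constraints---invariance of the excedance set, the prescribed increase of $\textsf{den}$, and invertibility---hold simultaneously requires careful bookkeeping of the inversion counts of $\textsf{EXCL}$ and $\textsf{NEXCL}$, and this combinatorial verification is likely to form the technical heart of the proof.
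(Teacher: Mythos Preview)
Your high-level case split matches the paper's, but the concrete mechanisms you propose in both nontrivial cases diverge from what actually works, and the gaps are not merely bookkeeping.

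For $c>\textsf{exc}(\sigma)$, shifting ``the suffix of $\sigma$ one position to the right'' would move excedance-letters too, and a letter $\sigma_j>j$ shifted to position $j+1$ need not satisfy $\sigma_j>j+1$; no simple ``value adjustment'' repairs this without destroying invertibility. The paper's move is to shift \emph{only the non-excedance-letters} $\sigma_{k_d},\sigma_{k_{d+1}},\ldots,\sigma_{k_t}$ one non-excedance position to the right (excedance-letters stay put) and then place $n$ at the vacated slot $k_d$. This preserves every excedance status except at $k_d$ automatically; $\textsf{inv}(\textsf{\scriptsize NEXCL})$ is unchanged (same word), and the $\textsf{den}$ increment is $k_d+v=(u+d)+v=s+d=c$, where $u,v$ count excedance-letters left and right of $\sigma_{k_d}$.

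For $1\le c\le s$, the paper does \emph{not} rotate the first $c$ excedance positions, and your hope that only $\textsf{inv}(\textsf{\scriptsize EXCL})$ changes is incorrect. With $d=s+1-c$, the paper performs two coupled operations: (i) among the excedance-letters lying strictly left of position $e_d$ and of value $\ge e_d$, promote each to the next-larger such letter, the largest becoming $n$; (ii) shift the non-excedance-letters weakly right of position $e_d$ one non-excedance position to the right and insert the freed value $e_d$ at the vacated slot. Step~(i) raises $\textsf{inv}(\textsf{\scriptsize EXCL})$ by $c-x$ and step~(ii) raises $\textsf{inv}(\textsf{\scriptsize NEXCL})$ by $x$, where $x$ is the number of promoted letters; that these sum to $c$ hinges on Dumont's identity $|\{\sigma_i:\sigma_i<v\le i\}|=|\{\sigma_i:\sigma_i\ge v>i\}|$ applied at $v=e_d$. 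A pure rotation of the first $c$ excedance positions neither preserves the excedance set in general nor produces this $x/(c-x)$ split. Finally, for injectivity the paper introduces the notion of a \emph{critical non-excedance-letter} (a non-excedance-letter $w_i$ with $[w_i,i)\subseteq\textsf{Exc}(w)$) and shows that comparing the position of $n$ with the rightmost such letter distinguishes the three cases and pins down $e_d$; this invariant is the missing ingredient in your inverse.
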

It is clear that the top equation in (\ref{eq-(exc,den)-Exc-den}) is a strengthening of the top equation in (\ref{eq-Euler--Mahonian}).
Therefore, the bijection $\phi_{n}^{\text{den}}$ in Theorem \ref{Thm-New-bijection} yields a bijective proof of Denert's conjecture. 
We now present this bijection.

\vskip 2pt	

\begin{framed}
\begin{center}
\vskip -6pt
{\bf{The map}} $\phi_{n}^{\text{den}}: \mathfrak{S}_{n-1}\times\{0,1,\ldots,n-1\}\rightarrow\mathfrak{S}_{n}$
\end{center}
\vskip -6pt

Let $\sigma=\sigma_{1}\sigma_{2}\ldots\sigma_{n-1}\in\mathfrak{S}_{n-1}$ and let $c$ be an integer with $0\leq c\leq n-1$.
Let $\textsf{exc}(\sigma)=s$, and let the excedance-letters of $\sigma$ be
$e_{1},e_{2},\ldots,e_{s}$, where $e_{1}<e_{2}<\cdots<e_{s}$.
Let the non-excedance-letters of $\sigma$ be 
$\sigma_{k_{1}},\sigma_{k_{2}},\ldots,\sigma_{k_{t}}$ (equivalently, the non-excedances of $\sigma$ are $k_{1},k_{2},\ldots,k_{t}$), where $k_{1}<k_{2}<\cdots<k_{t}$. 
We distinguish three cases.
\begin{itemize}[leftmargin=1.5em]
\vskip 2pt	
\item[$\bullet$]
 Case 1: $c=0$. 
 Define $\phi_{n}^{\text{den}}(\sigma,c)=\sigma_{1}\sigma_{2}\ldots\sigma_{n-1}n$.

\vskip 2pt	
\item[$\bullet$] 
Case 2: $1\leq c\leq s$. 
Let $d=s+1-c$.
Assume that $e_{j_{1}}<e_{j_{2}}<\cdots<e_{j_{x}}$ are the excedance-letters of $\sigma$ that lie strictly to the left of $\sigma_{e_{d}}$ and are at least $e_{d}$.
Clearly, $e_{j_{1}}=e_{d}$.
Assume that $\sigma_{k_{y}},\sigma_{k_{y+1}},\ldots,\sigma_{k_{t}}$ are the non-excedance-letters of $\sigma$ that lie weakly to the right of $\sigma_{e_{d}}$.
Define $\phi_{n}^{\text{den}}(\sigma,c)$  as the permutation obtained from $\sigma$ by the following three steps:

\vskip 2pt	

\textbf{Step 1.  Adjusting Excedance-Letters}\\
	Replace $e_{j_{i}}$ with $e_{j_{i+1}}$ for  $i=1,2,\ldots,x$, with the convention that $e_{j_{x+1}}=n$.
	
\vskip 2pt	
	
\textbf{Step 2.  Shifting Non-Excedance-Letters}\\
	Replace  $\sigma_{k_{i}}$ with $\sigma_{k_{i-1}}$ for $i=y+1,y+2,\ldots,t,t+1$, with the convention that $k_{t+1}=n$
	(in other words, shift the letters $\sigma_{k_{y}},\sigma_{k_{y+1}},\ldots,\sigma_{k_{t}}$ one non-excedance  to the right).
	
\vskip 2pt		
	
\textbf{Step 3.  Placing $e_{d}$}\\
	Set the $k_{y}$-th letter to be $e_{d}$.

(See Example \ref{example-Case2} for an instance in Case 2.)

\vskip 2pt	
\item[$\bullet$] Case 3: $s+1\leq c\leq n-1$.
Let $d=c-s$. 
Define $\phi_{n}^{\text{den}}(\sigma,c)$ as the permutation obtained from $\sigma$ by the following two steps:

\vskip 2pt		

\textbf{Step i. Shifting Non-Excedance-Letters}\\
Replace $\sigma_{k_{i}}$ with $\sigma_{k_{i-1}}$ for $i=d+1,d+2,\ldots,t,t+1$, with the convention that $k_{t+1}=n$
(in other words, shift the letters $\sigma_{k_{d}},\sigma_{k_{d+1}},\ldots,\sigma_{k_{t}}$
one non-excedance to the right).

\vskip 2pt		

\textbf{Step ii. Placing $n$} \\
Set the $k_{d}$-th letter to be $n$.    

(See Example \ref{example-Case3} for an instance in Case 3.)
\end{itemize}
\end{framed}

\begin{figure}[t]
	\centering
	\includegraphics[width=15.2cm]{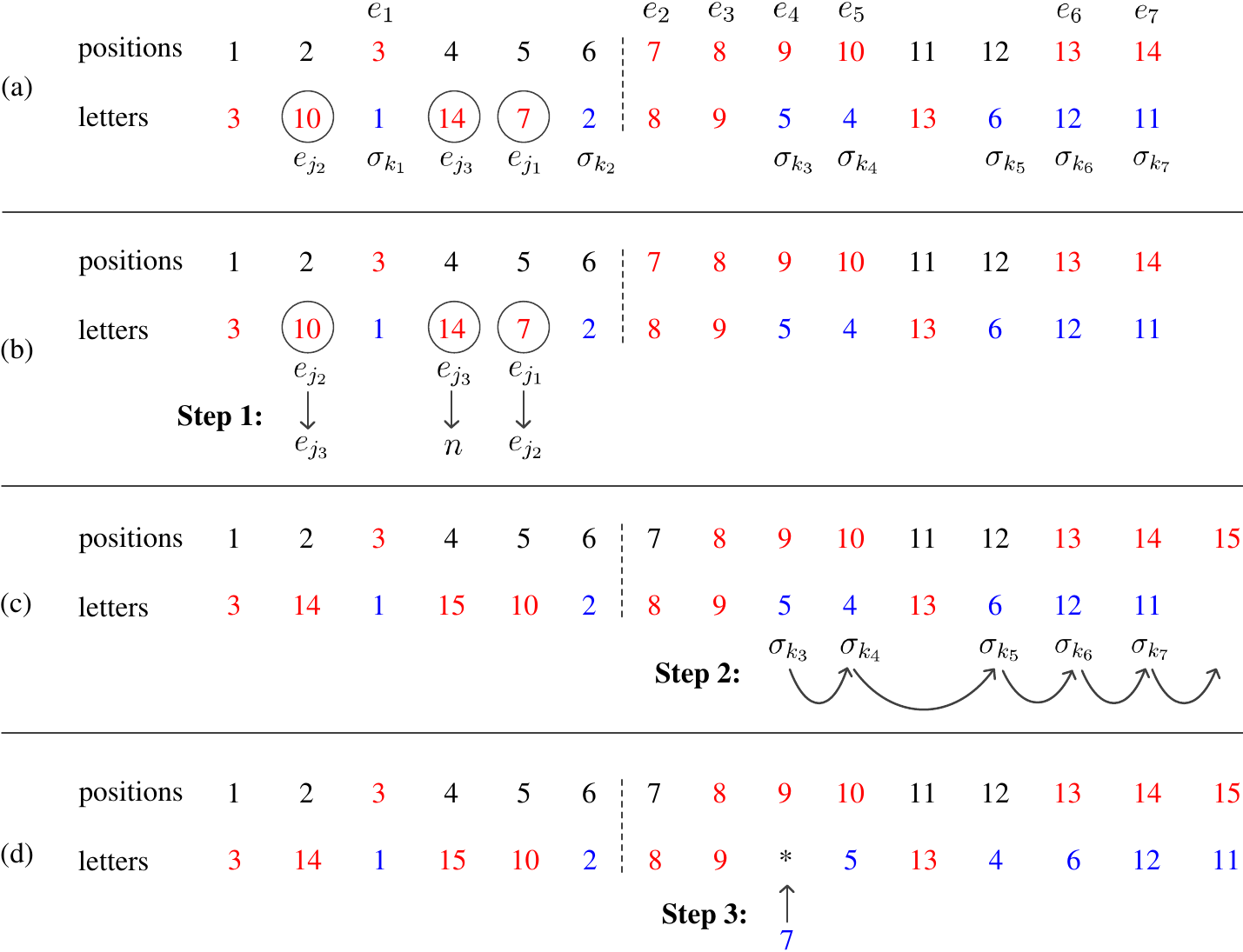}
	\caption{~ An illustration for Example \ref{example-Case2}.
	}\label{Fig_exm_1}
\end{figure}
\begin{example}\label{example-Case2}
\emph{
Let $\sigma=3~10~1~14~7~2~8~9~5~4~13~6~12~11$, and let $c=6$.
Clearly, $s=\textsf{exc}(\sigma)=7$,
and the excedance-letters of $\sigma$ are $e_{1}=3,~e_{2}=7,~e_{3}=8,~e_{4}=9,~e_{5}=10,~e_{6}=13,~e_{7}=14.$
The non-excedance-letters are
$\sigma_{k_{1}}=1,\sigma_{k_{2}}=2,\sigma_{k_{3}}=5,\sigma_{k_{4}}=4,\sigma_{k_{5}}=6,\sigma_{k_{6}}=12,\sigma_{k_{7}}=11.$
Note that $1\leq c\leq s$.
We have $d=s+1-c=2$, so $e_{d}=e_{2}=7$.
The excedance-letters of $\sigma$ that lie strictly to the left of $\sigma_{e_{d}}=\sigma_{7}$ and that are at least $e_{d}=7$ are
$e_{j_{1}}=7,~e_{j_{2}}=10,~e_{j_{3}}=14.$
The non-excedance-letters of $\sigma$ that lie weakly to the right of $\sigma_{e_{d}}=\sigma_{7}$ are $\sigma_{k_{3}}=5,~\sigma_{k_{4}}=4,~\sigma_{k_{5}}=6,~\sigma_{k_{6}}=12,~\sigma_{k_{7}}=11.$
See Fig. \ref{Fig_exm_1}(a) for an illustration, where
the positions and letters $e_{1},e_{2},\ldots,e_{7}$ are in red,
the non-excedance-letters are in blue, 
and the letters $e_{j_{1}},e_{j_{2}},e_{j_{3}}$ are circled.
\\
\textbf{Step 1. Adjusting Excedance-Letters}\\
Replace $e_{j_{1}}=7$ with $e_{j_{2}}=10$,
$e_{j_{2}}=10$ with $e_{j_{3}}=14$,
and $e_{j_{3}}=14$ with $e_{j_{4}}=n=15$;
see Fig. \ref{Fig_exm_1}(b) for an illustration.
This results in the sequence: $3~14~1~15~10~2~8~9~5~4~13~6~12~11$.
\\
\textbf{Step 2. Shifting Non-Excedance-Letters}\\
Shift  $\sigma_{k_{3}},\sigma_{k_{4}},\ldots,\sigma_{k_{7}}$
one non-excedance  to the right;
see Fig. \ref{Fig_exm_1}(c) for an illustration.
This results in the sequence: $3~14~1~15~10~2~8~9*5~13~4~6~12~11$,
where $\ast$ is the $k_{3}$-th letter.
\\
\textbf{Step 3. Placing $e_{d}$}\\
Set the $k_{3}$-th letter to be $e_{d}=7$;
see Fig. \ref{Fig_exm_1}(d) for an illustration.
Then we obtain $\phi_{15}^{\text{den}}(\sigma,6)=3~14~1~15~10~2~8~9~7~5~13~4~6~12~11$.
}\end{example}
\begin{figure}[t]
	\centering
	\includegraphics[width=15.2cm]{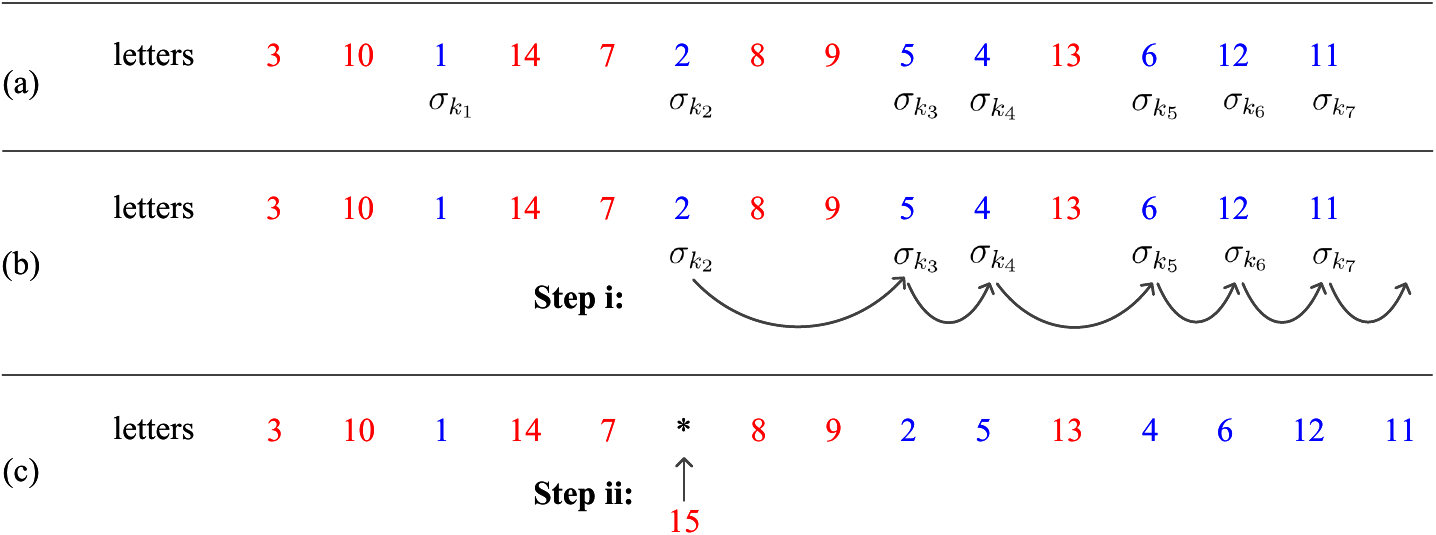}
	\caption{~ An illustration for Example \ref{example-Case3}.
	}\label{Fig_exm_2}
\end{figure}
\begin{example}\label{example-Case3}
\emph{
Consider the running example
$\sigma=3~10~1~14~7~2~8~9~5~4~13~6~12~11$, and let $c=9$.
Note that $c\geq s+1=8$. 
In this case, $d=c-s=9-7=2$.
Recall that the non-excedance-letters are
$\sigma_{k_{1}}=1,\sigma_{k_{2}}=2,\sigma_{k_{3}}=5,\sigma_{k_{4}}=4,\sigma_{k_{5}}=6,\sigma_{k_{6}}=12,\sigma_{k_{7}}=11.$
See Fig. \ref{Fig_exm_2}(a) for an illustration,
where the excedance-letters and non-excedance-letters of $\sigma$ are in red and blue, respectively.
\\
\textbf{Step i. Shifting Non-Excedance-Letters}\\ 
Shift  $\sigma_{k_{2}},\sigma_{k_{3}},\ldots,\sigma_{k_{7}}$
one non-excedance to the right; 
see Fig. \ref{Fig_exm_2}(b) for an illustration.
This results in the sequence: $3~10~1~14~7*8~9~2~5~13~4~6~12~11$, 
where $\ast$ is the $k_{2}$-th letter.  
\\
\textbf{Step ii. Placing $n$}\\
Set the $k_{2}$-th letter to be $n=15$;  
see Fig. \ref{Fig_exm_2}(c) for an illustration.
Then we obtain $\phi_{15}^{\text{den}}(\sigma,9)=3~10~1~14~7~15~8~9~2~5~13~4~6~12~11$.
}\end{example}
\vspace{-6pt}
To establish Theorem \ref{Thm-New-bijection}, 
we prove two lemmas.
\begin{lemma}\label{lemma:Exc-den}
For $\sigma\in\mathfrak{S}_{n-1}$ and $0\leq c\leq n-1$, 
we have 
\begin{align*}
	\emph{\textsf{Exc}}(\phi_{n}^{\emph{\text{den}}}(\sigma,c))&=
	\left\{
	\begin{aligned}
		&\emph{\textsf{Exc}}(\sigma), \quad\quad\quad\quad~\text{if~}0\leq c\leq \emph{\textsf{exc}}(\sigma),\\
		&\emph{\textsf{Exc}}(\sigma)\cup\{k_{d}\},\quad\text{~otherwise}
	\end{aligned}
	\right. \\
	\emph{\textsf{den}}(\phi_{n}^{\emph{\text{den}}}(\sigma,c))&=\emph{\textsf{den}}(\sigma)+c, 
\end{align*}
where $d=c-\emph{\textsf{exc}}(\sigma)$ and
$k_{d}$ is the $d$-th non-excedance of $\sigma$ from smallest to largest.
\end{lemma}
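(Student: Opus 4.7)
The plan is to verify both claims by case analysis on the three cases in the construction. Case~1 ($c=0$) is immediate: appending $n$ at position $n$ creates no new excedance, and since $n$ is the largest letter, appending it to $\textsf{\scriptsize{NEXCL}}$ creates no new inversions, so $\textsf{Exc}$ and $\textsf{den}$ are both preserved and the increment equals $0=c$.

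For Case~2 ($1\le c\le s$), write $p_{j_i}$ for the position of $e_{j_i}$ in $\sigma$; all chain positions satisfy $p_{j_i}<e_d$ (by the definition of the chain, together with $p_d<e_d$ since $e_d$ is an excedance-letter). Direct inspection then shows that the excedance set is preserved after the three steps: after Step~1, $e_{j_{i+1}}>e_{j_i}>p_{j_i}$; after Step~2, $\sigma_{k_i}\le k_i<k_{i+1}$; and after Step~3, $e_d\le k_y$ by minimality of $k_y$. With $\textsf{Exc}$ fixed, $\textsf{den}(\phi_n^{\text{den}}(\sigma,c))-\textsf{den}(\sigma)=\Delta_E+\Delta_N$, the inversion changes in $\textsf{\scriptsize{EXCL}}$ and $\textsf{\scriptsize{NEXCL}}$. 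The new $\textsf{\scriptsize{NEXCL}}$ is obtained from the old one by inserting $e_d$ at slot $y$; since $k_{y-1}<e_d$ by minimality of $k_y$, every letter before the insertion satisfies $\sigma_{k_i}\le k_i<e_d$, so $\Delta_N=|\{i\ge y:\sigma_{k_i}<e_d\}|$, which a direct count of the letters $<e_d$ at positions $\ge e_d$ identifies with $x$. For $\Delta_E$, the simultaneous chain replacements $e_{j_m}\mapsto e_{j_{m+1}}$ contribute to inversions only against unchanged entries $u\in(e_{j_m},e_{j_{m+1}})$; the key geometric observation is that any non-chain excedance-letter of value $>e_d$ sits at a position $\ge e_d$, hence to the right of every chain position. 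All such contributions are therefore $+1$, and counting gives $\Delta_E=(s-d)-(x-1)=s+1-d-x$. Summing, $\Delta_E+\Delta_N=s+1-d=c$.

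Case~3 ($s+1\le c\le n-1$) is parallel but easier. Placing $n$ at the former non-excedance position $k_d$ makes $k_d$ an excedance (as $n>k_d$), and each shifted letter $\sigma_{k_i}\le k_i<k_{i+1}$ remains a non-excedance, so $\textsf{Exc}(\phi_n^{\text{den}}(\sigma,c))=\textsf{Exc}(\sigma)\cup\{k_d\}$. The term $\sum_{i\in\textsf{Exc}}i$ gains $k_d$; the word $\textsf{\scriptsize{NEXCL}}$ is literally unchanged as a sequence; and inserting the maximum letter $n$ into $\textsf{\scriptsize{EXCL}}$ at the slot corresponding to $k_d$ contributes exactly $|\{\text{excedances}>k_d\}|=s-(k_d-d)$ new inversions, using that $k_1,\ldots,k_d$ are precisely the non-excedance positions $\le k_d$. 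Summing, $k_d+s-k_d+d=s+d=c$. The main obstacle is the pairwise sign analysis in Case~2: once one verifies via the geometric separation (chain positions all $<e_d$ versus non-chain excedance-letters $>e_d$ all at positions $\ge e_d$) that every nonzero contribution to $\Delta_E$ is $+1$, the remaining counts reduce to routine bookkeeping.
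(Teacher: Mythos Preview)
Your proof is correct and follows essentially the same three-case analysis as the paper, computing the changes $\Delta_E$ and $\Delta_N$ to the excedance-letter and non-excedance-letter inversion counts separately in Case~2 and showing they sum to $c$. Your interval-based computation of $\Delta_E$ (each non-chain excedance-letter of value $>e_d$ lies in exactly one interval $(e_{j_m},e_{j_{m+1}})$ and, being at a position $\ge e_d$, contributes $+1$) is a mild repackaging of the paper's partition into sets $A,B,C$ with $\Delta_E=|C|$, and your ``direct count'' giving $\Delta_N=x$ is precisely the Dumont identity the paper cites; the one point you leave implicit---that inversions among chain positions themselves are unchanged because the monotone shift preserves their relative order---is stated explicitly in the paper.
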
 
\begin{proof} 
We keep the notation of the construction of $\phi_{n}^{\text{den}}$.
Assume that $w=\phi_{n}^{\text{den}}(\sigma,c)$. 
\\
\noindent \textbf{Case 1:} $c=0$. 
Since $w=\sigma_{1}\sigma_{2}\ldots\sigma_{n-1}n$,
it is clear that 
\begin{align*}
\textsf{Exc}(w)&=\textsf{Exc}(\sigma), \\
\textsf{den}(w)&=\textsf{den}(\sigma)=\textsf{den}(\sigma)+c,
\end{align*}
and the result holds in this case.

\noindent\textbf{Case 2:} $1\leq c\leq s$. 	
Recall that $d=s+1-c$, 
and that $e_{j_{1}}<e_{j_{2}}<\cdots<e_{j_{x}}$ are the excedance-letters of $\sigma$ that lie strictly to the left of $\sigma_{e_{d}}$  and are at least $e_{d}$.
Let $\sigma^{\prime}$ be the resulting sequence after Step 1;
that is, $\sigma^{\prime}$ is obtained from $\sigma$ by replacing $e_{j_{i}}$ with $e_{j_{i+1}}$ for $i=1,2,\ldots,x$, where $e_{j_{x+1}}=n$.
We claim that
\vspace{-6pt}
\begin{equation}\label{eq-Excl-increase}
\textsf{inv}(\textsf{\scriptsize{EXCL}}(\sigma^{\prime}))=\textsf{inv}(\textsf{\scriptsize{EXCL}}(\sigma))+ c-x.
\vspace{-6pt}
\end{equation}
\begin{itemize}[leftmargin=1.6em]
	\item[]\emph{Proof}. 
	Consider the permutation $\sigma$.
	Let $A$ be the set of positions of letters $e_{1},e_{2},\ldots,e_{d-1}$.
	Let $B$ (resp. $C$) be the set of positions of the letters in $\{e_{d},e_{d+1},\ldots,e_{s}\}$ that lie strictly to the left (resp. weakly to the right) of $\sigma_{e_{d}}$ in $\sigma$.
	Note that $B$ is the set of positions of letters $e_{j_{1}},e_{j_{2}},\ldots,e_{j_{x}}$.
	So $|B|=x$ and $|C|=s-d+1-|B|=c-x$.
	Let $\textsf{inv}_{\sigma}(X,Y)$ denote the number of inversions $(i,j)$ in $\sigma$ so that $i\in X, j\in Y$ or $i\in Y,j\in X$.
	Clearly,
	$\textsf{inv}(\textsf{\scriptsize{EXCL}}(\sigma))=\textsf{inv}_{\sigma}(A,A)+\textsf{inv}_{\sigma}(A,B)+\textsf{inv}_{\sigma}(A,C)+\textsf{inv}_{\sigma}(B,B)+\textsf{inv}_{\sigma}(B,C)+\textsf{inv}_{\sigma}(C,C).$
	It is easy to see that Step 1 does not affect $\textsf{inv}_{\sigma}(A,A),\textsf{inv}_{\sigma}(A,C),\text{~and~}\textsf{inv}_{\sigma}(C,C)$.
	Note that $\text{Step 1}$ preserves the relative order of the letters at the positions in $B$,
	so  $\textsf{inv}_{\sigma^{\prime}}(B,B)=\textsf{inv}_{\sigma}(B,B)$.
	It is not hard to see that
	$\textsf{inv}_{\sigma^{\prime}}(A,B)=\textsf{inv}_{\sigma}(A,B)$,
	since‌ $\sigma_{i}<\sigma_{j}$ if $i\in A$ and $j\in B$.
	It remains to examine the effect of $\text{Step 1}$ on $\textsf{inv}_{\sigma}(B,C)$.
	Note that the following facts hold:
	(1) $i<j$ if $i\in B$ and $j\in C$,
	(2) $e_{j_{1}}=e_{d}<\sigma_{j}$ for all $j\in C$, and
	(3) $e_{j_{x+1}}=n>\sigma_{j}$ for all $j\in C$.  
	Then  $\textsf{inv}_{\sigma^{\prime}}(B,C)=\textsf{inv}_{\sigma}(B,C)+|C|=\textsf{inv}_{\sigma}(B,C)+c-x$.
	It follows that $\textsf{inv}(\textsf{\scriptsize{EXCL}}(\sigma^{\prime}))=\textsf{inv}(\textsf{\scriptsize{EXCL}}(\sigma))+ c-x$,
	as claimed.
\end{itemize}

\noindent Clearly, Step 1 does not affect $\textsf{Exc}(\sigma)$ and $\textsf{inv}(\textsf{\scriptsize{NEXCL}}(\sigma))$.
Before considering the effect of Steps 2 and 3, we make the following two observations.

\noindent 1. There are $x$ letters in $\{\sigma_{k_{y}},\sigma_{k_{y+1}},\ldots,\sigma_{k_{t}}\}$ that are less than $e_{d}$. 

\begin{itemize}[leftmargin=1.6em]
	\item[]\emph{Proof}. 
	We need the following result of Dumont \cite{Dumont-1974}:
	Let $\sigma$ be a permutation and $v$ a positive integer, then
	$|\{\sigma_{i}:\sigma_{i}<v\leq i\}|
	=|\{\sigma_{i}:\sigma_{i}\geq v>i\}|.$
	Setting $v=e_{d}$ and noting that 
	$\{\sigma_{i}:\sigma_{i}\geq e_{d}>i\}=\{e_{j_{1}},e_{j_{2}},\ldots,e_{j_{x}}\}$,
	we have $|\{\sigma_{i}:\sigma_{i}<e_{d}\leq i\}|=x$,
	which means that there are $x$ non-excedance-letters that lie weakly to the right of $\sigma_{e_{d}}$ and are less than $e_{d}$.
	This yields the desired result.
\end{itemize}

\noindent 2. Any non-excedance-letter of $\sigma$ that lies to the left of $\sigma_{e_{d}}$ is less than $e_{d}$.
\begin{itemize}[leftmargin=1.6em]
	\item[]\emph{Proof}. 
	Let $\sigma_{i}$ be a non-excedance-letter of $\sigma$ that lies to the left of $\sigma_{e_{d}}$. Then we have $\sigma_{i}\leq i< e_{d}$, as desired.
\end{itemize}	
\noindent 
It follows from the two observations above that Steps 2 and 3 increase $\textsf{inv}(\textsf{\scriptsize{NEXCL}}(\sigma^{\prime}))$ by $x$.
Moreover, Steps 2 and 3 do not affect $\textsf{Exc}(\sigma^{\prime})$ and 
$\textsf{inv}(\textsf{\scriptsize{EXCL}}(\sigma^{\prime}))$.
In summary,
Step 1 only increases $\textsf{inv}(\textsf{\scriptsize{EXCL}}(\sigma))$ by $c-x$, while Steps 2 and 3 only increase $\textsf{inv}(\textsf{\scriptsize{NEXCL}}(\sigma^{\prime}))$ by $x$.
Then 
\begin{align*}
\textsf{Exc}(w)&=\textsf{Exc}(\sigma), \\
\textsf{den}(w)&=\textsf{den}(\sigma)+(c-x)+x=\textsf{den}(\sigma)+c.
\end{align*}
Therefore, the result holds in this case.

\noindent \textbf{Case 3:} $s+1\leq c\leq n-1$. 
Recall that $d=c-s$.
Assume that there are $u$ (resp. $v$) excedance-letters of $\sigma$ to the left (resp. right) of  $\sigma_{k_{d}}$.
Thus, $u+v=s$.
It is not hard to see that $u+d=k_{d}$.
After Steps i and ii, we 

\noindent 1. create a new excedance $k_{d}$,

\noindent 2. increase $\textsf{inv}(\textsf{\scriptsize{EXCL}}(\sigma))$ by $v$, and

\noindent 3. do not change $\textsf{inv}(\textsf{\scriptsize{NEXCL}}(\sigma))$.

\noindent Hence, we have
\vspace{-8pt}
\begin{align*}
\textsf{Exc}(w)&=\textsf{Exc}(\sigma)\cup\{k_{d}\}, \\
\textsf{den}(w)&=\textsf{den}(\sigma)+k_{d}+v=\textsf{den}(\sigma)+u+d+v\\
&=\textsf{den}(\sigma)+s+d=\textsf{den}(\sigma)+c.
\end{align*}
Thus, the result holds in this case.
\end{proof}
\begin{lemma}\label{lemma:bijection}
The map 
$$\phi_{n}^{\emph{\text{den}}}: \mathfrak{S}_{n-1}\times\{0,1,\ldots,n-1\}\rightarrow\mathfrak{S}_{n}
$$
is a bijection.
\end{lemma}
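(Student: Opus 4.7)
The plan is a cardinality argument. Since $|\mathfrak{S}_{n-1}\times\{0,1,\ldots,n-1\}|=n!=|\mathfrak{S}_n|$, it suffices to construct a left inverse $\psi_n\colon\mathfrak{S}_n\to\mathfrak{S}_{n-1}\times\{0,1,\ldots,n-1\}$ with $\psi_n\circ\phi_n^{\text{den}}=\mathrm{id}$; injectivity then automatically upgrades to bijectivity. The idea is to define $\psi_n$ on $w\in\mathfrak{S}_n$ by reading off from $w$ which of the three cases of $\phi_n^{\text{den}}$ produced it, and then explicitly reversing that case.

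If $w_n=n$, only Case 1 is possible and I would set $\psi_n(w):=(w_1w_2\ldots w_{n-1},0)$. Otherwise, let $p:=w^{-1}(n)<n$ and attempt first the \emph{Case 3 unwinding}: list the non-excedance positions of $w$ in $[n-1]$ together with $p$ in increasing order as $k_1<k_2<\ldots<k_{t+1}$, let $d$ be defined by $k_d=p$, and set $\sigma'(k_i):=w(k_{i+1})$ for $i=d,\ldots,t$ (with $k_{t+1}:=n$), while keeping $\sigma'(j):=w(j)$ on the remaining positions of $[n-1]$. I would call this unwinding consistent if $\sigma'(k_i)\le k_i$ for every $i\ge d$; in that case $\sigma'\in\mathfrak{S}_{n-1}$ has $\textsf{Exc}(\sigma')=\textsf{Exc}(w)\setminus\{p\}$, and I set $\psi_n(w):=(\sigma',\textsf{exc}(w)-1+d)$, recovering a Case 3 preimage.

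If the Case 3 unwinding fails, with first violation at some smallest $i^*\ge d$, then I declare that $w$ came from Case 2 and read off the data directly from the failure: set $k_y:=k_{i^*+1}$ and $e_d:=w(k_y)$, undo Steps 2 and 3 only on the indices $\ge y$, and reverse Step 1 by identifying the rotation chain as $\{q\in\textsf{Exc}(w):q<e_d,\ w(q)>e_d\}$, sorting it by $w$-value to recover the sequence $p_{j_1},p_{j_2},\ldots,p_{j_x}=p$ and sliding each letter one step back so that $e_d=e_{j_1}$ returns to $p_{j_1}$. This produces $\psi_n(w):=(\sigma,\textsf{exc}(\sigma)+1-d)$.

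The main obstacle will be the two-sided consistency claim that legitimizes this case split. In direction (a), if $w=\phi_n^{\text{den}}(\sigma,c)$ arises from Case 3, then every shifted entry $w(k_{i+1})=\sigma(k_i)\le k_i$, so the Case 3 unwinding is automatically consistent and returns $(\sigma,c)$. In direction (b), if $w$ arises from Case 2, then $w(k_y)=e_d$ by Step 3, and since $k_y$ is the \emph{smallest} non-excedance position of $\sigma$ with $k_y\ge e_d$, the immediate predecessor of $k_y$ in the merged list---either some $k_{y-1}$ from the original list or the inserted $p$ (which is strictly to the left of position $e_d$, hence $p<e_d$)---is strictly less than $e_d$, so the Case 3 unwinding feeds $e_d$ to that predecessor and triggers the violation at index $i^*$. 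Once (a) and (b) are established by tracking each step of the forward construction, the Case 2 reverse moves are easily seen to reconstitute the original $\sigma$, and applying $\phi_n^{\text{den}}$ to the output of $\psi_n$ returns $w$; hence $\psi_n\circ\phi_n^{\text{den}}=\mathrm{id}$, and the bijection follows.
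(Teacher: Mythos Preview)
Your overall strategy---prove injectivity by building an explicit left inverse and then invoke equal cardinalities---matches the paper's, and your handling of Case~1 and of genuine Case~3 images is fine. The gap is in your Case~2 recovery: you assert that the \emph{first} violation $i^*$ in the Case~3 unwinding pinpoints $k_y$ and $e_d$, but this is false.

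Take $\sigma=51324\in\mathfrak{S}_5$ and $c=1$. Here $s=\textsf{exc}(\sigma)=1$, so we are in Case~2 with $d=1$, $e_d=5$, $k_y=5$, and one computes $w=\phi_6^{\text{den}}(\sigma,1)=613254$. Now $p=w^{-1}(6)=1$, the non-excedance positions of $w$ in $[5]$ are $\{2,3,4,5\}$, and your merged list is $1,2,3,4,5$ with $D=1$. The unwinding already fails at $i=2$, since $w(\ell_3)=w(3)=3>2=\ell_2$; so your rule declares $e_d=3$, whereas the true $e_d$ is $5$. The violation you argue for in direction~(b) does occur (at $i=4$, since $w(5)=5>4$), but it is not the first one: the earlier spurious failure is caused by the fixed point $\sigma_3=3$, which was untouched by Steps~1--3 of $\phi_n^{\text{den}}$ yet fails your consistency check once everything is shifted by the insertion of $p$. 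Your direction~(b) only shows that \emph{a} violation exists at the predecessor of $k_y$; it never shows---and it is not true---that this is the \emph{smallest} one.

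The paper resolves exactly this ambiguity by introducing the notion of a \emph{critical} non-excedance-letter of $w$ (a non-excedance-letter $w_i$ with $[w_i,i)\subseteq\textsf{Exc}(w)$) and proving that in Case~2 the \emph{rightmost} critical non-excedance-letter equals $e_d$, while in Case~3 it is at most $z$. In the language of your unwinding, this corresponds to taking the \emph{last} violation rather than the first (one checks there are no violations past the predecessor of $k_y$). With that correction your scheme can be completed; as written, the proposed inverse $\psi_n$ does not satisfy $\psi_n\circ\phi_n^{\text{den}}=\mathrm{id}$.
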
	
\begin{proof}
For any positive integers $a$ and $b$ with $a<b$, 
let $(a,b)$ denote the set of all integers $i$ with $a<i<b$, and let $[a, b):=(a, b)\cup\{a\}$.
Let $w=w_{1}w_{2}\ldots w_{n}\in\mathfrak{S}_{n}$,
and let $w_{i}$ be a non-excedance-letter of $w$ 
(so $w_{i}\leq i$).
We call $w_{i}$ called a 
\emph{critical non-excedance-letter} if $[w_{i},i)\subseteq \textsf{Exc}(w)$.
Note that if $w_{i}=i$, then $w_{i}$ is a critical non-excedance-letter,
since $[w_{i},i)=\emptyset\subseteq \textsf{Exc}(w)$.
Any permutation $w\in\mathfrak{S}_{n}$ has at least one non-excedance-letter---specifically, $w_{n}$---and therefore has at least one critical non-excedance-letter, 
as the leftmost non-excedance-letter must be critical.

We keep the notation of the construction of $\phi_{n}^{\text{den}}$.
Let $w=w_{1}w_{2}\ldots w_{n}=\phi_{n}^{\text{den}}(\sigma,c)$. 
Assume that $w_{z}=n$, and that the rightmost critical non-excedance-letter of $w$ is letter $a$.
The image $w$ satisfies the following properties.

\begin{itemize}
\item[$\bullet$] In Case 1, we have $z=n$.

\item[$\bullet$] In Case 2, we have $z<a=e_{d}$.

\emph{Proof.} 
Recall that the letter $n$ in $w$ arises from replacing  $e_{j_{x}}$ in $\sigma$, 
with $e_{j_{x}}$ lying to the left of $\sigma_{e_{d}}$ in $\sigma$.
Hence $z<e_{d}$. 
In the remainder of the proof, we show that $a=e_{d}$.
Note that the non-excedance-letters of $w$ weakly to the right  of  $w_{e_{d}}$ are $w_{k_{y}},w_{k_{y+1}},\ldots,w_{k_{t+1}}$,
where $w_{k_{y}}=e_{d}$ and $w_{k_{i}}=\sigma_{k_{i-1}}$ for $y+1\leq i\leq t+1$.
We prove that $a=e_{d}$ by showing that the non-excedance-letter $w_{k_{y}}=e_{d}$ is critical, while $w_{k_{i}}$ is not critical for $i=y+1,y+2,\ldots,t+1$.
Since $\sigma_{k_{y}}$ is the first (i.e., leftmost) non-excedance-letter that lies weakly to the right of $\sigma_{e_{d}}$,
it follows that $[w_{k_{y}},k_{y})=[e_{d},k_{y})\subseteq\textsf{Exc}(\sigma)=\textsf{Exc}(w)$.
So $w_{k_{y}}=e_{d}$ is a critical non-excedance-letter of $w$.
Consider $w_{k_{i}}$ with $y+1\leq i\leq t+1$.
Note that $w_{k_{i}}=\sigma_{k_{i-1}}$.
Since $\sigma_{k_{i-1}}$ is a non-excedance-letter of $\sigma$,
we have $\sigma_{k_{i-1}}\leq k_{i-1}$.
Then $w_{k_{i}}=\sigma_{k_{i-1}}\leq k_{i-1}<k_{i}$.
Since $k_{i-1}\notin\textsf{Exc}(\sigma)=\textsf{Exc}(w)$ and $k_{i-1}\in[w_{k_{i}},k_{i})$,
we see that $w_{k_{i}}$ is not a critical non-excedance-letter of $w$,
completing the proof.

\item[$\bullet$] In Case 3, we have $a\leq z<n$.

\emph{Proof.} Observe that in this case $z=k_{d}$.  
Clearly, $k_{d}<n$.
We show below that $a\leq k_{d}$.
Note that the non-excedance-letters of $w$ are
$w_{k_{1}},w_{k_{2}},\ldots,w_{k_{d-1}},w_{k_{d+1}},\ldots,w_{k_{t+1}}$,
where $w_{k_{i}}=\sigma_{k_{i}}$ for $1\leq i\leq d-1$, and $w_{k_{i}}=\sigma_{k_{i-1}}$ for $d+1\leq i\leq t+1$.
Assume that $a=w_{k_{j}}$. 
Our goal is to show that $w_{k_{j}}\leq k_{d}$.
If $1\leq j\leq d-1$, then
$w_{k_{j}}\leq k_{j}<k_{d}$, and the result holds.
If $j=d+1$, $w_{k_{j}}=w_{k_{d+1}}=\sigma_{k_{d}}\leq k_{d}$, 
and the result holds.
If $d+2\leq j\leq t+1$, then $w_{k_{j}}=\sigma_{k_{j-1}}$.
Since $\sigma_{k_{j-1}}\leq k_{j-1}< k_{j}$,
we have $k_{j-1}\in[\sigma_{k_{j-1}},k_{j})=[w_{k_{j}},k_{j})$.
Combining this with that $k_{j-1}\notin\textsf{Exc}(w)$ (note that $d+1\leq j-1\leq t$),
we see that $w_{k_{j}}$ is not a critical non-excedance-letter of $w$,
which is a contradiction.
\end{itemize}
Based on the above properties of $w$, 
we can uniquely recover $(\sigma,c)$ from its image $w$ as follows.
\begin{itemize}
\item[$\bullet$] If $z=n$,
let $\sigma$ be obtained from $w$ by removing the letter $n$.

\item[$\bullet$] If $z<a$,  denote $e_{d}:=a$.
Assume that $e_{j_{2}}<\cdots<e_{j_{x}}<e_{j_{x+1}}:=n$ are the excedance-letters of $w$ that lie strictly to the left of $w_{e_{d}}$ and that are greater than $e_{d}$.
Let $\sigma$ be obtained from $w$ by the following three steps:

1. Delete the letter $e_{d}$ and replace it with a star ``$\ast$'';

2. Shift the non-excedance-letters that lie to the right of the star one non-excedance to the left (we think of the position occupied by the star as a non-excedance);

3. Replace $e_{j_{i}}$ with $e_{j_{i-1}}$ for all $i=2,3,\ldots,x+1$, 
where we set $e_{j_{1}}=e_{d}$.

\item[$\bullet$] If $a\leq z<n$,
let $\sigma$ be obtained from $w$ by the following two steps:

1. Delete the letter $n$ and replace it with a star ``$\ast$'';

2. Shift the non-excedance-letters that lie to the right of the star one non-excedance to the left (we think of the position occupied by the star as a non-excedance).
\end{itemize}
Now we have recovered $\sigma$ from $w$.
Finally, let $c=\textsf{den}(w)-\textsf{den}(\sigma)$,
and we successfully recover $(\sigma,c)$ from $w$.
Therefore, the map $\phi_{n}^{\text{den}}: \mathfrak{S}_{n-1}\times\{0,1,\ldots,n-1\}\rightarrow\mathfrak{S}_{n}
$ is an injection.
Since $|\mathfrak{S}_{n-1}\times\{0,1,\ldots,n-1\}|=|\mathfrak{S}_{n}|=n!$,
it follows that $\phi_{n}^{\text{den}}$ is a bijection.
\end{proof}
By Lemmas \ref{lemma:Exc-den} and \ref{lemma:bijection},
we obtain Theorem \ref{Thm-New-bijection}.

\section{Proof of Theorem \ref{mian-theorem-(excr,den)}}\label{Proof of Theorem 1.3}
As we will see in this section, 
the bijection $\phi_{n}^{\text{den}}$ introduced in the previous section can be used not only to prove that $(\textsf{exc},\textsf{den})$ is Euler--Mahonian, 
but also to show that  $(\textsf{exc}_{r},\textsf{den})$ is $r$-Euler--Mahonian.

To prove that a pair $(\textsf{stat}_{1},\textsf{stat}_{2})$ of permutation statistics is $r$-Euler--Mahonian, 
it suffices to show that if $n\leq r$,   $(\textsf{stat}_{1},\textsf{stat}_{2})$ is equidistributed with $(\textsf{0},\textsf{inv})$ over $\mathfrak{S}_{n}$;
if $n>r$, there exists a bijection 
\begin{equation*} \phi_{r,n}:\mathfrak{S}_{n-1}\times\{0,1,\ldots,n-1\}\rightarrow\mathfrak{S}_{n}
\end{equation*} 
such that for $\sigma\in\mathfrak{S}_{n-1}$ and $0\leq c\leq n-1$, 
we have
\begin{equation}\label{eq-r-Euler--Mahonian}
	\begin{split}
		\textsf{stat}_{1}(\phi_{r,n}(\sigma,c))&=\left
		\{
		\begin{aligned}
			&\textsf{stat}_{1}(\sigma), \quad\quad\quad\text{if~}0\leq c\leq \textsf{stat}_{1}(\sigma)+r-1,\\
			&\textsf{stat}_{1}(\sigma)+1,\quad\text{~otherwise,}
		\end{aligned}
		\right.\\ 
		\textsf{stat}_{2}(\phi_{r,n}(\sigma,c))&=\textsf{stat}_{2}(\sigma)+c,
	\end{split}
\end{equation} 
see \cite{Rawlings-1981}.

Clearly,  when $n\leq r$, 
we have $\textsf{exc}_{r}(\sigma)=0$ for any $\sigma\in\mathfrak{S}_{n}$. 
Note that $\textsf{den}$ and $\textsf{inv}$ are equidistributed.
Thus, 
$(\textsf{exc}_{r},\textsf{den})$ and $(\textsf{0},\textsf{inv})$ are equidistributed over $\mathfrak{S}_{n}$ when $n\leq r$.
In the remainder of this section,  we assume that $n>r$.

Let $\sigma\in\mathfrak{S}_{n-1}$ and $0\leq c\leq n-1$.
Recall that
$$\textsf{Exc}_{r}(\sigma)=\{i:\sigma_{i}>i,\!~i\geq r\}=\{i\in\textsf{Exc}(\sigma):i\geq r\}.$$
Denote 
\vspace{-8pt}
\begin{align*}
A_{r}(\sigma)=\{i:\sigma_{i}>i,\!~i\leq r-1\}
\text{~and~}
B_{r}(\sigma)=\{i:\sigma_{i}\leq i,\!~i\leq r-1\}.
\end{align*}
Let $w=\phi_{n}^{\text{den}}(\sigma,c)$.
By Theorem \ref{Thm-New-bijection}, we have
\begin{align*}
	\textsf{Exc}(w)&=
	\left\{
	\begin{aligned}
		&\textsf{Exc}(\sigma), \quad\quad\quad\quad~\text{if~}0\leq c\leq \textsf{exc}(\sigma),\\
		&\textsf{Exc}(\sigma)\cup\{k_{d}\},\quad\text{~otherwise}
	\end{aligned}
	\right. \\
	\textsf{den}(w)&=\textsf{den}(\sigma)+c, 
\end{align*}
where $d=c-\textsf{exc}(\sigma)$ and $k_{d}$ is the $d$-th non-excedance of $\sigma$ from smallest to largest.
Then
\begin{itemize}
	\item[$\bullet$]If $0\leq c\leq \textsf{exc}(\sigma)$, we have
	$\textsf{Exc}(w)=\textsf{Exc}(\sigma)$,
	implying that $\textsf{Exc}_{r}(w)=\textsf{Exc}_{r}(\sigma)$.
	\item[$\bullet$]If $\textsf{exc}(\sigma)+1\leq c\leq \textsf{exc}(\sigma)+|B_{r}(\sigma)|$,
	we have $\textsf{Exc}(w)=\textsf{Exc}(\sigma)\cup\{k_{d}\}$.
	As $d=c-\textsf{exc}(\sigma)$, 
	then $1\leq d\leq |B_{r}(\sigma)|$. 
	It follows that $k_{d}\in B_{r}(\sigma)$, and hence $k_{d}\leq r-1$.
	So $k_{d}\notin\textsf{Exc}_{r}(w)$.
	Combining this with $\textsf{Exc}(w)=\textsf{Exc}(\sigma)\cup\{k_{d}\}$,
	we have $\textsf{Exc}_{r}(w)=\textsf{Exc}_{r}(\sigma)$.
	
	\item[$\bullet$]If $c>\textsf{exc}(\sigma)+|B_{r}(\sigma)|$,
	we have $\textsf{Exc}(w)=\textsf{Exc}(\sigma)\cup\{k_{d}\}$.
	Note that $d=c-\textsf{exc}(\sigma)>|B_{r}(\sigma)|$. 
	It follows that $k_{d}\notin B_{r}(\sigma)$,
	and hence $k_{d}\geq r$. 
    So $k_{d}\in \textsf{Exc}_{r}(w)$.
	Combining this with $\textsf{Exc}(w)=\textsf{Exc}(\sigma)\cup\{k_{d}\}$,
	we have $\textsf{Exc}_{r}(w)=\textsf{Exc}_{r}(\sigma)\cup\{k_{d}\}$.
\end{itemize}	
Thus, in summary, 
\begin{equation} \label{eq-Excr-den-Br}
	\begin{split}
		\textsf{Exc}_{r}(w)&=\left
		\{
		\begin{aligned}
			&\textsf{Exc}_{r}(\sigma), \quad\quad\quad\quad~\text{if~}0\leq c\leq \textsf{exc}(\sigma)+|B_{r}(\sigma)|,\\
			&\textsf{Exc}_{r}(\sigma)\cup\{k_{d}\},\quad\text{~otherwise,}
		\end{aligned}
		\right.\\
				\textsf{den}(w)&=\textsf{den}(\sigma)+c.
	\end{split}
\end{equation}
It is clear that $\textsf{Exc}(\sigma)$ is the disjoint union of $\textsf{Exc}_{r}(\sigma)$ and $A_{r}(\sigma)$. Thus,
$$\textsf{exc}(\sigma)=\textsf{exc}_{r}(\sigma)+|A_{r}(\sigma)|.$$
It follows that
\vspace{-6pt}
\begin{align*}
\textsf{exc}(\sigma)+|B_{r}(\sigma)|&=\textsf{exc}_{r}(\sigma)+|A_{r}(\sigma)|+|B_{r}(\sigma)|=\textsf{exc}_{r}(\sigma)+r-1.
\end{align*}
Then (\ref{eq-Excr-den-Br}) becomes
\begin{equation*} 
	\begin{split}
		\textsf{Exc}_{r}(w)&=\left
		\{
		\begin{aligned}
			&\textsf{Exc}_{r}(\sigma), \quad\quad\quad\quad~\text{if~}0\leq c\leq \textsf{exc}_{r}(\sigma)+r-1,\\
			&\textsf{Exc}_{r}(\sigma)\cup\{k_{d}\},\quad\text{~otherwise,}
		\end{aligned}
		\right.\\ 
		\textsf{den}(w)&=\textsf{den}(\sigma)+c,
	\end{split}
\end{equation*}
implying that
\vspace{-6pt}
\begin{equation}\label{eq-(excr,den)-all}
	\begin{split}
		\textsf{exc}_{r}(w)&=\left
		\{
		\begin{aligned}
			&\textsf{exc}_{r}(\sigma), \quad\quad\quad\text{if~}0\leq c\leq \textsf{exc}_{r}(\sigma)+r-1,\\
			&\textsf{exc}_{r}(\sigma)+1,\quad\text{~otherwise,}
		\end{aligned}
		\right.\\ 
		\textsf{den}(w)&=\textsf{den}(\sigma)+c.
	\end{split}
\end{equation}
Comparing with equation (\ref{eq-r-Euler--Mahonian}),
we see that $(\textsf{exc}_{r},\textsf{den})$ is $r$-Euler--Mahonian,
completing the proof of Theorem \ref{mian-theorem-(excr,den)}.

\begin{example}
\emph{
Let $\sigma=621534$.
We have $\textsf{Exc}(\sigma)=\{1,4\}$ and $\textsf{den}(\sigma)=7$. 
Clearly,
$$\textsf{exc}(\sigma)=2,\!~
\textsf{exc}_{2}(\sigma)=1,\!~
\textsf{exc}_{3}(\sigma)=1,\!~
\textsf{exc}_{4}(\sigma)=1,\!~
\textsf{exc}_{5}(\sigma)=0,\!~
\textsf{exc}_{6}(\sigma)=0.$$
In Table \ref{Table-1}, 
we list the images of $(\sigma,c)$ under the bijection $\phi_{7}^{\text{den}}$ for $c=0,1,\ldots,6$,
together with the values of $(\textsf{exc}_{r},\textsf{den})$ of $\phi_{7}^{\text{den}}(\sigma,c)$ for $r=1,2,\ldots,6$.
From Table \ref{Table-1}, one can verify equation (\ref{eq-(excr,den)-all}).
}\end{example}
\begin{table}[t]
	\centering
	\caption{Images of $(\sigma,c)$ under $\phi_{7}^{\text{den}}$ and values of $(\textsf{exc}_{r},\textsf{den})$}\label{Table-1}	
	\begin{tabular}{c|c|c|c|c|c|c|c}
			$c$
			&$\phi_{7}^{\text{den}}(\sigma,c)$
			&$(\textsf{exc},\textsf{den})$
			&$(\textsf{exc}_{2},\textsf{den})$
			&$(\textsf{exc}_{3},\textsf{den})$
			&$(\textsf{exc}_{4},\textsf{den})$
			&$(\textsf{exc}_{5},\textsf{den})$
			&$(\textsf{exc}_{6},\textsf{den})$
			\\
			\hline
			$0$&$6215347$&$(2,7)~\!~$&$(1,7)~\!~$&$(1,7)~\!~$&$(1,7)~\!~$&$(0,7)~\!~$&$(0,7)~\!~$ \\
			$1$&$7215364$&$(2,8)~\!~$&$(1,8)~\!~$&$(1,8)~\!~$&$(1,8)~\!~$&$(0,8)~\!~$&$(0,8)~\!~$  \\
			$2$&$7216534$&$(2,9)~\!~$&$(1,9)~\!~$&$(1,9)~\!~$&$(1,9)~\!~$&$(0,9)~\!~$&$(0,9)~\!~$ \\
			$3$&$6725134$&$(3,10)$&$(2,10)$&$(1,10)$&$(1,10)$&$(0,10)$&$(0,10)$ \\ 
			$4$&$6275134$&$(3,11)$&$(2,11)$&$(2,11)$&$(1,11)$&$(0,11)$&$(0,11)$    \\
			$5$&$6215734$&$(3,12)$&$(2,12)$&$(2,12)$&$(2,12)$&$(1,12)$&$(0,12)$    \\
			$6$&$6215374$&$(3,13)$&$(2,13)$&$(2,13)$&$(2,13)$&$(1,13)$&$(1,13)$    \\
		\end{tabular}
\end{table} 
\section{Proof of Theorem \ref{mian-general-theorem}}\label{Proof of Theorem 1.4}
In this section, we present an extension of the bijection 
$\phi_{n}^{\text{den}}$ introduced in Section \ref{section-new-proof},
followed by a proof of Theorem \ref{mian-general-theorem}.

Recall that for $\sigma\in\mathfrak{S}_{n}$, if $\sigma_{i}\geq i+g$ and $\sigma_{i}\geq h$,
the letter $\sigma_{i}$ is called a $g$-gap $h$-level excedance-letter of $\sigma$,
and the position $i$ is called a $g$-gap $h$-level excedance-letter position.
Recall also  that
\begin{align*}
	g\textsf{den}_{h}(\sigma)=\sum_{i\in g\textsf{Exclp}_{h}(\sigma)}(i+g-1)+\textsf{inv}(g\textsf{\scriptsize{EXCL}}_{h}(\sigma))+\textsf{inv}(g\textsf{\scriptsize{NEXCL}}_{h}(\sigma)),
\end{align*}
where 
$g\textsf{\scriptsize{EXCL}}_{h}(\sigma)$ (resp. $g\textsf{\scriptsize{NEXCL}}_{h}(\sigma)$) is the subsequence of $\sigma$  that consists of the $g$-gap $h$-level excedance-letters (resp.  remaining letters),
and $g\textsf{Exclp}_{h}(\sigma)$ is the set of $g$-gap $h$-level excedance-letter positions of $\sigma$ (i.e., the positions corresponding to the letters of $g\textsf{\scriptsize{EXCL}}_{h}(\sigma)$).
The main result of this section is the following theorem.
\begin{theorem}\label{Thm-bijection-denl}
	Let $g\geq1$ and $1\leq h\leq n$.
	There is a bijection $$\phi_{g,h,n}^{\emph{\text{den}}}: \mathfrak{S}_{n-1}\times\{0,1,\ldots,n-1\}\rightarrow\mathfrak{S}_{n},$$
	such that for $\sigma\in\mathfrak{S}_{n-1}$ and $0\leq c\leq n-1$, 
	we have 
\begin{equation}\label{eq-(Excr,denl)-all}
\begin{split}
g\emph{\textsf{Exclp}}_{h}(\phi_{g,h,n}^{\emph{\text{den}}}(\sigma,c))&=
\left\{
\begin{aligned}
	&g\emph{\textsf{Exclp}}_{h}(\sigma), \quad\quad\quad\quad~\text{if~}0\leq c\leq |g\emph{\textsf{Exclp}}_{h}(\sigma)|+g-1,\\
	&g\emph{\textsf{Exclp}}_{h}(\sigma)\cup\{k_{d}\},\quad\text{~otherwise}
\end{aligned}
\right.\\
g\emph{\textsf{den}}_{h}(\phi_{g,h,n}^{\emph{\text{den}}}(\sigma,c))&=g\emph{\textsf{den}}_{h}(\sigma)+c,
	\end{split}
\end{equation}	
where $d=c-|g\emph{\textsf{Exclp}}_{h}(\sigma)|-g+1$ and
$k_{d}$ is the $d$-th non-$g$-gap $h$-level excedance-letter position of $\sigma$ from smallest to largest.
\end{theorem}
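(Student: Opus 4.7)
The plan is to construct $\phi_{g,h,n}^{\text{den}}$ by extending the three-case construction of $\phi_n^{\text{den}}$ from Section~\ref{section-new-proof}, enlarging the first case to absorb the extra $g-1$ input values of $c$ that become available due to the $+(g-1)$ contribution per $g$-gap $h$-level excedance position in $g\textsf{den}_h$. Writing $s := |g\textsf{Exclp}_h(\sigma)|$, I partition $\{0,1,\dots,n-1\}$ into three consecutive intervals of sizes $g$, $s$, and $n-s-g$. In \textbf{Case 1} ($0 \le c \le g-1$), insert $n$ at position $n-c$ and shift $\sigma_{n-c},\dots,\sigma_{n-1}$ one slot rightward. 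In \textbf{Case 2} ($g \le c \le s+g-1$, with $d := s+g-c$), apply Steps~1--3 of Case~2 of $\phi_n^{\text{den}}$ with ``excedance-letter'' replaced throughout by ``$g$-gap $h$-level excedance-letter'' and with the pivot moved from position $e_d$ to position $e_d-g+1$. In \textbf{Case 3} ($s+g \le c \le n-1$, with $d := c-s-g+1$), apply Steps~i and~ii of Case~3 of $\phi_n^{\text{den}}$ verbatim: insert $n$ at position $k_d$ and shift the subsequent non-$g$-gap $h$-level excedance-letters one slot rightward.

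The structural fact that makes Case~1 well-defined, and simultaneously guarantees $k_d \le n-g$ in Case~3, is that the last $g$ positions of any $\sigma \in \mathfrak{S}_{n-1}$ are automatically non-$g$-gap $h$-level excedance-letter positions, since $\sigma_i \le n-1 < i+g$ whenever $i \ge n-g$. The verification of (\ref{eq-(Excr,denl)-all}) for Cases~1 and~3 then reduces to short direct computations analogous to those in Section~\ref{section-new-proof}. For Case~3, writing $k_d = u+d$ and $s = u+v$ (where $u$, $v$ are the numbers of $g$-gap $h$-level excedance-letter positions strictly left and right of $k_d$), placing $n$ at $k_d$ adds $k_d+g-1$ to $\sum_{i \in g\textsf{Exclp}_h}(i+g-1)$ and $v$ to $\textsf{inv}(g\textsf{EXCL}_h)$, for a total increment of $(s+g-1)+d = c$.

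The main obstacle is Case~2, where $\sum_{i \in g\textsf{Exclp}_h}(i+g-1)$ does not change, so the entire increment $c = s+g-d$ must come from the two inversion terms. The key ingredient is a generalized Dumont-type identity, obtained by a routine row-column double count in the permutation matrix:
\[
\bigl|\{i : \sigma_i \ge e_d,\ i \le e_d-g\}\bigr| = \bigl|\{i : \sigma_i < e_d,\ i \ge e_d-g+1\}\bigr| - (g-1).
\]
Every letter strictly less than $e_d$ at a position $\ge e_d-g+1$ automatically violates the $g$-gap condition (as $\sigma_i < e_d \le i+g-1 < i+g$), so the right-hand side counts exactly the non-$g$-gap $h$-level excedance-letters weakly right of the new pivot that are less than $e_d$. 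Writing the left-hand side as $x$, one then shows (mimicking the original Case~2 argument with the new pivot) that Step~1 increases $\textsf{inv}(g\textsf{EXCL}_h)$ by $s-d+1-x$ while Steps~2--3 increase $\textsf{inv}(g\textsf{NEXCL}_h)$ by $x+g-1$; these sum to $c$. The extra $+(g-1)$ in the Dumont identity is exactly the correction that accommodates the enlarged Case~1.

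Finally, the bijection property (the analog of Lemma~\ref{lemma:bijection}) follows by adapting the notion of a critical non-excedance-letter: declare $w_i$ \emph{critical} if it is a non-$g$-gap $h$-level excedance-letter of $w$ satisfying $[w_i-g+1, i) \subseteq g\textsf{Exclp}_h(w)$. I would then check that the position $z$ of the letter $n$ in $w = \phi_{g,h,n}^{\text{den}}(\sigma,c)$ satisfies $z > n-g$ in Case~1, $z \le n-g$ and $z < a$ in Case~2, and $z \le n-g$ and $a \le z$ in Case~3, where $a$ is the value of the rightmost critical letter; these inequalities distinguish the three cases from the image $w$ alone and make each of the three constructions invertible, yielding unique recovery of $(\sigma, c)$ from $w$. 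Since $|\mathfrak{S}_{n-1} \times \{0,1,\dots,n-1\}| = |\mathfrak{S}_n|$, this injection upgrades to a bijection.
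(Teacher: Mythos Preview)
Your construction of $\phi_{g,h,n}^{\text{den}}$ and your verification of \eqref{eq-(Excr,denl)-all} match the paper's argument essentially line for line; the Dumont-type identity you quote is exactly the one the paper invokes, and the bookkeeping for Cases~1--3 is correct.

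The gap is in the injectivity argument. Two modifications you make relative to Lemma~\ref{lemma:bijection} both fail. First, your notion of ``critical'' (non-$g$-gap $h$-level excedance-letter with $[w_i-g+1,i)\subseteq g\textsf{Exclp}_h(w)$) is not the right one: any position $i$ with $w_i\ge i+g$ but $w_i<h$ is a non-$g$-gap $h$-level excedance-letter with $[w_i-g+1,i)=\emptyset$, hence vacuously critical, and such spurious critical letters can push the rightmost critical value $a$ too high. Second, even when your $a$ agrees with the paper's, the test ``$z<a$ versus $a\le z$'' does not separate Case~2 from Case~3. Concretely, take $g=1$, $h=3$, $n=4$, $\sigma=213$, $c=1$. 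Here $s=0$, so this is Case~3 with $d=1$, $k_d=1$, and $w=\phi_{1,3,4}^{\text{den}}(\sigma,1)=4213$. The position of $n$ is $z=1$. The non-$g$-gap $h$-level excedance-letter positions of $w$ are $2,3,4$, and $g\textsf{Exclp}_h(w)=\{1\}$; the only critical letter (your definition or the paper's) is $w_2=2$, so $a=2$. Your test reads $z=1<a=2$ and misclassifies $w$ as a Case~2 image. For the spurious-critical phenomenon, take $g=1$, $h=4$, $\sigma=3124$, $c=1$ (again Case~3): $w=53124$, $z=1$, and under your definition $w_2=3$ is a non-$g$-gap $h$-level excedance-letter (since $3<h$) with empty interval, hence critical, giving $a=3>z$.

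The paper avoids both issues by defining ``critical'' using plain $g$-gap excedances (dropping $h$ entirely): $w_i$ is critical if $w_i<i+g$ and $[w_i-g+1,i)\subseteq g\textsf{Exc}(w)$. It then separates the cases by comparing $a$ with $\max\{z+g,h\}$ rather than with $z$: Case~1 iff $z+g>n$; Case~2 iff $\max\{z+g,h\}\le a$; Case~3 iff $a<\max\{z+g,h\}\le n$. In the first counterexample this gives $\max\{z+g,h\}=\max\{2,3\}=3>2=a$, correctly flagging Case~3. The appearance of $h$ in the threshold is forced precisely because in Case~2 the recovered value $e_d=a$ must be a $g$-gap $h$-level excedance-letter, hence $\ge h$, whereas in Case~3 the rightmost critical value need only satisfy $a<\max\{k_d+g,h\}$.
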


\vskip 0pt
\begin{framed}
\begin{center}
\vskip -8pt
{\bf{The map}} $\phi_{g,h,n}^{\text{den}}: \mathfrak{S}_{n-1}\times\{0,1,\ldots,n-1\}\rightarrow\mathfrak{S}_{n}$ for $1\leq h\leq n$
\end{center}
\vskip -8pt

	Let $\sigma=\sigma_{1}\sigma_{2}\ldots\sigma_{n-1}\in\mathfrak{S}_{n-1}$ and let $c$ be an integer with $0\leq c\leq n-1$.
	Let $|g\textsf{Exclp}_{h}(\sigma)|=s$, and let the $g$-gap $h$-level excedance-letters of $\sigma$ be $e_{1},e_{2},\ldots,e_{s}$,
	where $e_{1}<e_{2}<\cdots<e_{s}$.
	Let the non-$g$-gap $h$-level excedance-letters of $\sigma$ be $\sigma_{k_{1}},\sigma_{k_{2}},\ldots,\sigma_{k_{t}}$ (equivalently, the non-$g$-gap $h$-level excedance-letter positions of $\sigma$ are $k_{1},k_{2},\ldots,k_{t}$),
	where $k_{1}<k_{2}<\cdots<k_{t}$.
	We distinguish three cases.
	\begin{itemize}[leftmargin=1.5em]
		\item[$\bullet$]
		Case 1: $0\leq c\leq g-1$.  
		Define $\phi_{g,h,n}^{\text{den}}(\sigma,c)$ to be the permutation obtained from $\sigma$ by inserting $n$ immediately after the letter $\sigma_{n-1-c}$.
		
		\item[$\bullet$] 
		Case 2: $g\leq c\leq s+g-1$.  
		Let $d=s+g-c$ and $p=e_{d}-g+1$.
		Assume that $e_{j_{1}}<e_{j_{2}}<\cdots<e_{j_{x}}$ are the $g$-gap $h$-level excedance-letters of $\sigma$ that lie strictly to the left of $\sigma_{p}$ and are at least $e_{d}$.
		Clearly, $e_{j_{1}}=e_{d}$.
		Assume that $\sigma_{k_{y}},\sigma_{k_{y+1}},\ldots,\sigma_{k_{t}}$ are the non-$g$-gap $h$-level excedance-letters of $\sigma$ that lie weakly to the right of $\sigma_{p}$.
		Define $\phi_{g,h,n}^{\text{den}}(\sigma,c)$  as the permutation obtained from $\sigma$ by the following three steps:
		
		\textbf{\text{Step 1}}: Replace $e_{j_{i}}$ with $e_{j_{i+1}}$ for  $i=1,2,\ldots,x$, with the convention that $e_{j_{x+1}}=n$.
		
		\textbf{\text{Step 2}}: Replace $\sigma_{k_{i}}$ with $\sigma_{k_{i-1}}$ for $i=y+1,y+2,\ldots,t,t+1$, with the convention that $k_{t+1}=n$
		(in other words, shift the letters $\sigma_{k_{y}},\sigma_{k_{y+1}},\ldots,\sigma_{k_{t}}$ one non-$g$-gap $h$-level excedance-letter position to the right).
		
		\textbf{\text{Step 3}}: Set the $k_{y}$-th letter to be $e_{d}$.  
		
		\item[$\bullet$] Case 3: $s+g\leq c\leq n-1$. 
		Let $d=c-s-g+1$. 
		Define $\phi_{g,h,n}^{\text{den}}(\sigma,c)$  to be the permutation obtained from $\sigma$ by the following two steps:
		
		\textbf{\text{Step i}}: Replace $\sigma_{k_{i}}$ with $\sigma_{k_{i-1}}$ for $i=d+1,d+2,\ldots,t,t+1$, with the convention that $k_{t+1}=n$
		(in other words, shift the letters $\sigma_{k_{d}},\sigma_{k_{d+1}},\ldots,\sigma_{k_{t}}$  one non-$g$-gap $h$-level excedance-letter position to the right).
		
		\textbf{\text{Step ii}}: Set the $k_{d}$-th letter to be $n$.    
	\end{itemize}
\end{framed}
\begin{proof}[Proof of Theorem \ref{Thm-bijection-denl}]
We keep the notation of the construction of $\phi_{g,h,n}^{\text{den}}$.
Let $w=\phi_{g,h,n}^{\text{den}}(\sigma,c)$. 
We first prove equation (\ref{eq-(Excr,denl)-all}).
\\
\noindent \textbf{Case 1:} $0\leq c\leq g-1$. 
Note that $w=\sigma_{1}\sigma_{2}\ldots\sigma_{n-1-c}n\sigma_{n-c}\ldots\sigma_{n-1}$.
It is not hard to see that the letters 
$w_{n-c},w_{n-c+1},\ldots,w_{n}$
(i.e., the letters $n,\sigma_{n-c},\ldots,\sigma_{n-1}$)
are all non-$g$-gap $h$-level excedance-letters of $w$.
Indeed, for $n-c\leq i\leq n$,
we have $w_{i}\leq n\leq i+c<i+g$.
Similarly, $\sigma_{n-c},\sigma_{n-c+1},\ldots,\sigma_{n-1}$ are all non-$g$-gap $h$-level excedance-letters of $\sigma$.
Therefore, $\textsf{inv}(g\textsf{\scriptsize{NEXCL}}_{h}(w))-\textsf{inv}(g\textsf{\scriptsize{NEXCL}}_{h}(\sigma))=c$.
It is not hard to see that
\begin{align*}
	g\textsf{Exclp}_{h}(w)&=g\textsf{Exclp}_{h}(\sigma), \\
	g\textsf{den}_{h}(w)&=g\textsf{den}_{h}(\sigma)+c.
\end{align*}
Thus, (\ref{eq-(Excr,denl)-all}) holds in this case.

\noindent \textbf{Case 2:} $g\leq c\leq s+g-1$. 	
Recall that $d=s+g-c$,  $p=e_{d}-g+1$, and that $e_{j_{1}}<e_{j_{2}}<\cdots<e_{j_{x}}$ are the $g$-gap $h$-level excedance-letters of $\sigma$ that lie strictly to the left of $\sigma_{p}$ and are at least $e_{d}$.
Since there are $x$ letters in $\{e_{d},e_{d+1},\ldots,e_{s}\}$ that lie strictly to the left of $\sigma_{p}$ 
(namely $e_{j_{1}},e_{j_{2}},\ldots,e_{j_{x}}$),
there are $s-d+1-x$ letters in $\{e_{d},e_{d+1},\ldots,e_{s}\}$ that lie weakly to the right of $\sigma_{p}$.
On the other hand, any $g$-gap $h$-level excedance-letters of $\sigma$ that lie weakly to the right of $\sigma_{p}$ must be greater than $e_{d}$.
Indeed, if $\sigma_{i}$ is such a letter, then $\sigma_{i}\geq i+g\geq p+g=e_{d}+1>e_{d}$.
Then we see that there are $s-d+1-x$ $g$-gap $h$-level excedance-letters of $\sigma$ that lie weakly to the right of $\sigma_{p}$, and each of them is greater than $e_{d}$.
Let $\sigma^{\prime}$ be the resulting sequence after Step 1.
That is, $\sigma^{\prime}$ is obtained from $\sigma$ by replacing $e_{j_{i}}$ with $e_{j_{i+1}}$ for $i=1,2,\ldots,x$, where $e_{j_{x+1}}=n$.
A similar argument as in the proof of (\ref{eq-Excl-increase}) yields
\begin{align*}
\textsf{inv}(g\textsf{\scriptsize{EXCL}}_{h}(\sigma^{\prime}))
-\textsf{inv}(g\textsf{\scriptsize{EXCL}}_{h}(\sigma))=s-d+1-x
=c-g-x+1.
\end{align*}
Clearly, Step 1 does not affect $g\textsf{Exclp}_{h}(\sigma)$ and $\textsf{inv}(g\textsf{\scriptsize{NEXCL}}_{h}(\sigma))$.
Before considering the effect of Steps 2 and 3, we make the following two observations.

\noindent 1. There are $x+g-1$ letters in $\{\sigma_{k_{y}},\sigma_{k_{y+1}},\ldots,\sigma_{k_{t}}\}$ that are less than $e_{d}$. 
\begin{itemize}[leftmargin=1.6em]
\item[]\emph{Proof}. 
We need the following result (see \cite{Liu-2024}):
Let $\sigma$ be a permutation, and $v,g$ be positive integers with  $v\geq g$,
then
$|\{\sigma_{i}:i\geq v-g+1,\!~\sigma_{i}<v\}|
=|\{\sigma_{i}:i<v-g+1,\!~\sigma_{i}\geq v\}|+g-1	
$.
Set $v=e_{d}$ and $p=e_{d}-g+1$. Then
$|\{\sigma_{i}:i\geq p,\!~\sigma_{i}<e_{d}\}|
=|\{\sigma_{i}:i<p,\!~\sigma_{i}\geq e_{d}\}|+g-1.$
Let $\sigma_{i}\in\{\sigma_{i}:i<p,\!~\sigma_{i}\geq e_{d}\}$,
we have $\sigma_{i}\geq e_{d}=p+g-1>i+g-1$,
then $\sigma_{i}\geq i+g$.
Combining this with $\sigma_{i}\geq e_{d}\geq h$,
we see that $\sigma_{i}$ is a $g$-gap $h$-level excedance-letter of $\sigma$.
Then we have $|\{\sigma_{i}:i<p,\!~\sigma_{i}\geq e_{d}\}|=|\{e_{j_{1}},e_{j_{2}},\ldots,e_{j_{x}}\}|=x$. 
It follows that $|\{\sigma_{i}:i\geq p,\!~\sigma_{i}<e_{d}\}|=x+g-1$.
Now we let $\sigma_{i}\in\{\sigma_{i}:i\geq p,\!~\sigma_{i}<e_{d}\}$.
Since $\sigma_{i}<e_{d}=p+g-1\leq i+g-1$,
we see that $\sigma_{i}$ is a non-$g$-gap $h$-level excedance-letter of $\sigma$.
Then the equation $|\{\sigma_{i}:i\geq p,\!~\sigma_{i}<e_{d}\}|=x+g-1$ means that there are $x+g-1$ non-$g$-gap $h$-level excedance-letters lying  weakly to the right of $\sigma_{p}$ and less than $e_{d}$. 
This yields the desired result.
\end{itemize}

\noindent 2. Any non-$g$-gap $h$-level excedance-letter of $\sigma$ that lies  to the left of $\sigma_{p}$ is less than $e_{d}$.
\begin{itemize}[leftmargin=1.6em]
	\item[]\emph{Proof}. 
	Let $\sigma_{i}$ be a non-$g$-gap $h$-level excedance-letter of $\sigma$ that lies to the left of $\sigma_{p}$.
	Then we have $i<p$ and either $\sigma_{i}\leq i+g-1$ or $\sigma_{i}<h$.
	If $\sigma_{i}\leq i+g-1$, we have $\sigma_{i}\leq i+g-1<p+g-1=e_{d}$.
	If $\sigma_{i}<h$, we have $\sigma_{i}<h\leq e_{d}$.
	Thus, $\sigma_{i}<e_{d}$ holds in each case, as desired.
\end{itemize}
It follows from the two observations above that Steps 2 and 3 increase $\textsf{inv}(g\textsf{\scriptsize{NEXCL}}_{h}(\sigma^{\prime}))$ by $x+g-1$.
Moreover, Steps 2 and 3 do not affect $g\textsf{Exclp}_{h}(\sigma^{\prime})$ and $\textsf{inv}(g\textsf{\scriptsize{EXCL}}_{h}(\sigma^{\prime}))$.
In summary,
Step 1 only increases $\textsf{inv}(g\textsf{\scriptsize{EXCL}}_{h}(\sigma))$ by $c-g-x+1$, while Steps 2 and 3 only increase $\textsf{inv}(g\textsf{\scriptsize{NEXCL}}_{h}(\sigma^{\prime}))$ by $x+g-1$.
Then we get
\begin{align*}
	g\textsf{Exclp}_{h}(w)&=g\textsf{Exclp}_{h}(\sigma), \\
	g\textsf{den}_{h}(w)&=g\textsf{den}_{h}(\sigma)+(c-g-x+1)+(x+g-1)
	=g\textsf{den}_{h}(\sigma)+c.
\end{align*}
Thus, (\ref{eq-(Excr,denl)-all}) holds in this case.

\noindent \textbf{Case 3:} $s+g\leq c\leq n-1$. 
Recall that, in this case, we have $d=c-s-g+1$.
Assume that there are $u$ (resp. $v$) $g$-gap $h$-level excedance-letters of $\sigma$ to the left (resp. right) of  $\sigma_{k_{d}}$.
Thus, $u+v=s$.
It is easy to see that $u+d=k_{d}$.
After Steps i and ii, we 

\noindent 1. create a new $g$-gap $h$-level excedance-letter position  $k_{d}$, 
\begin{itemize}[leftmargin=1.6em]
	\item[]\emph{Proof}. 
Note that $k_{d}=u+d\leq s+d=c-g+1\leq (n-1)-g+1=n-g$.
Therefore, $w_{k_{d}}=n\geq k_{d}+g$. 
Combining this with the condition that $h\leq n$,
we see that $w_{k_{d}}$ is a $g$-gap $h$-level excedance-letter of $w$,
and the proof follows.
\end{itemize}

\noindent 2. increase $\textsf{inv}(g\textsf{\scriptsize{EXCL}}_{h}(\sigma))$ by $v$, and

\noindent 3. do not change $\textsf{inv}(g\textsf{\scriptsize{NEXCL}}_{h}(\sigma))$.

\noindent Hence, we have
\begin{align*}
	g\textsf{Exclp}_{h}(w)&=g\textsf{Exclp}_{h}(\sigma)\cup\{k_{d}\},\\
	g\textsf{den}_{h}(w)&=g\textsf{den}_{h}(\sigma)+k_{d}+g-1+v=g\textsf{den}_{h}(\sigma)+u+d+g-1+v\\
	~\quad\quad\quad~&=g\textsf{den}_{h}(\sigma)+s+d+g-1=g\textsf{den}_{h}(\sigma)+c.
\end{align*}
Thus, (\ref{eq-(Excr,denl)-all}) holds in this case.

We have now completed the proof of  (\ref{eq-(Excr,denl)-all}).
It remains to show that 
$$\phi_{g,h,n}^{\text{den}}: \mathfrak{S}_{n-1}\times\{0,1,\ldots,n-1\}\rightarrow\mathfrak{S}_{n}
$$
is a bijection.
Let $w=w_{1}w_{2}\ldots w_{n}\in\mathfrak{S}_{n}$,
and let $w_{i}$ be a non-$g$-gap excedance-letter of $w$ 
(that is, $w_{i}<i+g$, and thus $w_{i}-g+1\leq i$),
we say that $w_{i}$ is a \emph{critical non-$g$-gap excedance-letter} of $w$ 
if $[w_{i}-g+1,i)\subseteq g\textsf{Exc}(w)=\{j:w_{j}\geq j+g\}$.
Any permutation $w\in\mathfrak{S}_{n}$ has at least one non-$g$-gap excedance-letter---specifically, $w_{n}$---and therefore has at least one critical non-$g$-gap excedance-letter, 
as the leftmost non-$g$-gap excedance-letter must be critical.

Let $w=w_{1}w_{2}\ldots w_{n}=\phi_{g,h,n}^{\text{den}}(\sigma,c)$.
Assume that $w_{z}=n$, and that the rightmost critical non-$g$-gap excedance-letter of $w$ is letter $a$.
The image $w$ satisfies the following properties.
\begin{itemize}
	\item[$\bullet$] In Case 1, we have $z+g> n$. 
	
	\emph{Proof}. In this case, we have $0\leq c\leq g-1$ and $z=n-c$. Then $z+g=n-c+g>n$, as desired.
	
	\item[$\bullet$] In Case 2, we have $\max\{z+g,h\}\leq a=e_{d}$.
	
\emph{Proof}. 
Recall that the letter $n$ in $w$ arises from replacing $e_{j_{x}}$ in $\sigma$, with $e_{j_{x}}$ lying to the left of $\sigma_{p}$ in $\sigma$. 
Hence  $z<p=e_{d}-g+1$,
so $z+g\leq e_{d}$.
Since $e_{d}$ is a $g$-gap $h$-level excedance-letter, we have $h\leq e_{d}$.
Therefore, $\max\{z+g,h\}\leq e_{d}$.
In the remainder of the proof, we show that $a=e_{d}$.
The non-$g$-gap $h$-level excedance-letter of $w$ weakly to the right of $w_{p}$ are $w_{k_{y}},w_{k_{y+1}},\ldots,w_{k_{t+1}}$,
where $w_{k_{y}}=e_{d}$ and $w_{k_{i}}=\sigma_{k_{i-1}}$ for $y+1\leq i\leq t+1$.
We claim that the non-$g$-gap excedance-letters of $w$ weakly to the right of $w_{p}$ are also $w_{k_{y}},w_{k_{y+1}},\ldots,w_{k_{t+1}}$.
Observe that a non-$g$-gap excedance-letter must be a non-$g$-gap $h$-level excedance-letter.
It suffices to show that 
for any $i$ with $y\leq i\leq t+1$, $w_{k_{i}}$ is a non-$g$-gap  excedance-letter of $w$.
Since $w_{k_{i}}$ is a non-$g$-gap $h$-level excedance-letter of $w$,
we have $w_{k_{i}}<k_{i}+g$ or $w_{k_{i}}<h$.
In either case, we have $w_{k_{i}}<k_{i}+g$, 
because $w_{k_{i}}<h\leq e_{d}=p+g-1\leq k_{i}+g-1<k_{i}+g$.
Thus, $w_{k_{i}}$ is a non-$g$-gap excedance-letter of $w$,
which completes the proof of the claim.
We prove that $a=e_{d}$ by showing that $w_{k_{y}}=e_{d}$ is a critical non-$g$-gap excedance-letter of $w$ and
$w_{k_{i}}$ is not a critical non-$g$-gap excedance-letter of $w$ for $i=y+1,y+2,\ldots,t+1$.
Since $\sigma_{k_{y}}$ is the first (leftmost) non-$g$-gap $h$-level excedance-letter of $\sigma$ lying weakly to the right of $\sigma_{p}$,
we have that $[w_{k_{y}}-g+1,k_{y})=[e_{d}-g+1,k_{y})=[p,k_{y})\subseteq g\textsf{Exclp}_{h}(\sigma)=g\textsf{Exclp}_{h}(w)\subseteq g\textsf{Exc}(w)$,
hence $w_{k_{y}}$ is a critical non-$g$-gap excedance-letter of $w$.
Consider $w_{k_{i}}$ with $y+1\leq i\leq t+1$.
Note that $w_{k_{i}}=\sigma_{k_{i-1}}$.
Since $\sigma_{k_{i-1}}$ is a non-$g$-gap $h$-level excedance-letter of $\sigma$,
we have $\sigma_{k_{i-1}}\leq k_{i-1}+g-1$ or $\sigma_{k_{i-1}}<h$.
In either case, we have $\sigma_{k_{i-1}}\leq k_{i-1}+g-1$, since $\sigma_{k_{i-1}}<h\leq e_{d}=p+g-1\leq k_{i-1}+g-1$.
Thus, $w_{k_{i}}-g+1=\sigma_{k_{i-1}}-g+1\leq k_{i-1}<k_{i}$.
So $k_{i-1}\in[w_{k_{i}}-g+1,k_{i})$.
By our claim we see that $k_{i-1}\notin g\textsf{Exc}(w)$ (note that $y\leq i-1\leq t$).
Thus, $w_{k_{i}}$ is not a critical non-$g$-gap excedance-letter of $w$.
We complete the proof.
	\item[$\bullet$] In Case 3, we have $a<\max\{z+g,h\}\leq n$.
    
    \emph{Proof.} Note that in this case we have $z=k_{d}$.
	In the proof of Case 3 of (\ref{eq-(Excr,denl)-all}), 
	we showed that $k_{d}\leq n-g$, and hence $k_{d}+g\leq n$.
	Combining this with the condition that $h\leq n$, 
	we obtain $\max\{k_{d}+g,h\}\leq n$. 
    In the remainder of the proof, we show that $a<\max\{k_{d}+g,h\}$.
	Note that the non-$g$-gap $h$-level excedance-letters of $w$ are
	$w_{k_{1}},w_{k_{2}},\ldots,w_{k_{d-1}},w_{k_{d+1}},\ldots,w_{k_{t+1}}$,
	where $w_{k_{i}}=\sigma_{k_{i}}$ for $1\leq i\leq d-1$,
	and $w_{k_{i}}=\sigma_{k_{i-1}}$ for $d+1\leq i\leq t+1$.
	Observe that a non-$g$-gap excedance-letter of $w$ must be a non-$g$-gap $h$-level excedance-letter of $w$.
	Assume that $a=w_{k_{j}}$,
	and our goal is to show that $w_{k_{j}}<\max\{k_{d}+g,h\}$.
	Since $w_{k_{j}}$ is non-$g$-gap excedance-letter of $w$,
	we have $w_{k_{j}}<k_{j}+g$.
	If $1\leq j\leq d-1$,  
	then $w_{k_{j}}<k_{j}+g<k_{d}+g$, and the result holds.
	If $j=d+1$, we have $w_{k_{j}}=w_{k_{d+1}}=\sigma_{k_{d}}$.
	As $\sigma_{k_{d}}$ is a non-$g$-gap $h$-level excedance-letter of $\sigma$, 
	we have $\sigma_{k_{d}}<k_{d}+g$ or $\sigma_{k_{d}}<h$.
	Therefore, $w_{k_{j}}=\sigma_{k_{d}}<\max\{k_{d}+g,h\}$,
	and the result holds.
    If $d+2\leq j\leq t+1$,
	then $w_{k_{j}}=\sigma_{k_{j-1}}$.
	Since $\sigma_{k_{j-1}}$ is a non-$g$-gap $h$-level excedance-letter of $\sigma$, we have
    $\sigma_{k_{j-1}}<k_{j-1}+g$ or $\sigma_{k_{j-1}}<h$.
    If $\sigma_{k_{j-1}}<h$, then $w_{k_{j}}=\sigma_{k_{j-1}}<h$,
    and the result holds.
    Below we assume that $\sigma_{k_{j-1}}\geq h$.
    Then we must have $\sigma_{k_{j-1}}<k_{j-1}+g$.
    It follows that $\sigma_{k_{j-1}}-g+1\leq k_{j-1}<k_{j}$.
    Thus, $k_{j-1}\in[\sigma_{k_{j-1}}-g+1,k_{j})=[w_{k_{j}}-g+1,k_{j})$.
    We claim that $w_{k_{j-1}}<k_{j-1}+g$.
    Note that $d+1\leq j-1\leq t$.
    Then $w_{k_{j-1}}$ is a non-$g$-gap $h$-level excedance-letter of $\sigma$.
    So $w_{k_{j-1}}<k_{j-1}+g$ or $w_{k_{j-1}}<h$.
    If $w_{k_{j-1}}<h$, 
    since we assumed $\sigma_{k_{j-1}}\geq h$,
    it follows that $w_{k_{j-1}}<h\leq \sigma_{k_{j-1}}<k_{j-1}+g$.
    Thus, in either case we have $w_{k_{j-1}}<k_{j-1}+g$,
    as claimed.
    By our claim we see that  $k_{j-1}\notin g\textsf{Exc}(w)$. 
    Combining this with $k_{j-1}\in[w_{k_{j}}-g+1,k_{j})$,
	we see that $w_{k_{j}}$ is not a critical non-$g$-gap excedance-letter of $w$, which is a contradiction.
\end{itemize}
Based on the above properties of $w$, 
we can uniquely recover $(\sigma,c)$ from its image $w$ as follows.
\begin{itemize}
	\item[$\bullet$]  If $z+g>n$,
	let $\sigma$ be the permutation obtained from $w$ by removing the letter $n$.
	
	\item[$\bullet$] If $\max\{z+g,h\}\leq a$, denote $e_{d}:=a$.
	Let $p=e_{d}-g+1$.
	Assume that $e_{j_{2}}<\cdots<e_{j_{x}}<e_{j_{x+1}}$ are the $g$-gap $h$-level excedance-letters of $w$ that lie strictly to the left of $w_{p}$ and are greater than $e_{d}$.
	Let $\sigma$ be obtained from $w$ by the following three steps:
	
	1. Delete the letter $e_{d}$ and replace it with a star ``$\ast$'';
	
	2. Shift the non-$g$-gap $h$-level excedance-letters to the right of the star one non-$g$-gap $h$-level excedance-letter position to the left (we think of the position occupied by the star as a non-$g$-gap $h$-level excedance-letter position);
	
	3. Replace $e_{j_{i}}$ with $e_{j_{i-1}}$ for all $i=2,3,\ldots,x+1$, where we set that $e_{j_{1}}=e_{d}$.
	
	\item[$\bullet$] If $a<\max\{z+g,h\}\leq n$,
	let $\sigma$ be  obtained from $w$ by the following two steps:
	
	1. Delete the letter $n$ and replace it with a star ``$\ast$'';

	2. Shift the non-$g$-gap $h$-level excedance-letters to the right of the star one non-$g$-gap $h$-level excedance-letter position to the left (we think of the position occupied by the star as a non-$g$-gap $h$-level excedance-letter position).
\end{itemize}
Now we have recovered $\sigma$ from $w$.
Finally, let $c=g\textsf{den}_{h}(w)-g\textsf{den}_{h}(\sigma)$,
and we successfully recover $(\sigma,c)$ from $w$.
Thus, the map $\phi_{g,h,n}^{\text{den}}: \mathfrak{S}_{n-1}\times\{0,1,\ldots,n-1\}\rightarrow\mathfrak{S}_{n}
$
is an injection.
Since $|\mathfrak{S}_{n-1}\times\{0,1,\ldots,n-1\}|=|\mathfrak{S}_{n}|=n!$,
we see that $\phi_{g,h,n}^{\text{den}}$ is a bijection.
\end{proof}

In the remainder of this section,  
we provide a proof of Theorem \ref{mian-general-theorem}.
To this end, we first state a lemma.
\begin{lemma}\label{Lemma_Exc_Exclp}
Let $g,\ell,h\geq1$ with $h\leq g+\ell$, and let $\pi$ be a permutation.
Then we have
$$g\emph{\textsf{Exc}}_{\ell}(\pi)=\{i\in g\emph{\textsf{Exclp}}_{h}(\pi):i\geq \ell\}.$$
\end{lemma}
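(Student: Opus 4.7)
The plan is to prove the claim by a direct double-inclusion argument that simply chases definitions, with the hypothesis $h \leq g+\ell$ doing all the work in one direction.

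First, I would recall the relevant definitions explicitly:
\[
g\textsf{Exc}_{\ell}(\pi) = \{i \in [n] : \pi_i \geq i + g,\ i \geq \ell\},
\qquad
g\textsf{Exclp}_{h}(\pi) = \{i \in [n] : \pi_i \geq i + g,\ \pi_i \geq h\}.
\]
Both sets require the gap condition $\pi_i \geq i + g$, so the only thing to verify is the equivalence, under this gap condition and the side condition $i \geq \ell$, of the level condition $\pi_i \geq h$ versus no extra condition.

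For the inclusion $g\textsf{Exc}_{\ell}(\pi) \subseteq \{i \in g\textsf{Exclp}_{h}(\pi) : i \geq \ell\}$, I would take $i \in g\textsf{Exc}_{\ell}(\pi)$, so $\pi_i \geq i+g$ and $i \geq \ell$, and use $h \leq g+\ell$ to chain
\[
\pi_i \geq i + g \geq \ell + g \geq h,
\]
which shows $\pi_i \geq h$; combined with $\pi_i \geq i+g$ and $i \geq \ell$, this gives $i \in g\textsf{Exclp}_{h}(\pi)$ and $i \geq \ell$.

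The reverse inclusion is immediate: if $i \in g\textsf{Exclp}_{h}(\pi)$ with $i \geq \ell$, then by definition $\pi_i \geq i+g$, and since $i \geq \ell$, we have $i \in g\textsf{Exc}_{\ell}(\pi)$; the level condition $\pi_i \geq h$ is not needed here. There is no real obstacle---the only subtle point is noticing that the hypothesis $h \leq g+\ell$ is exactly what is needed to deduce the level condition on $\pi_i$ from the gap and position conditions, and this is only used in the forward inclusion.
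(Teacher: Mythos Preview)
Your proof is correct and follows essentially the same approach as the paper's own proof: both recall the definitions and use the chain $\pi_i \geq i+g \geq \ell+g \geq h$ to see that the level condition $\pi_i \geq h$ is automatic when $i \in g\textsf{Exc}_{\ell}(\pi)$. The paper phrases this as a single equality of sets rather than two inclusions, but the content is identical.
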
	
\begin{proof}
		Recall that
\vspace{-6pt}
	\begin{align*}
		g\textsf{Exc}_{\ell}(\pi)&=\{i:\pi_{i}\geq i+g,\!~i\geq \ell\},\text{~and~}\\
		g\textsf{Exclp}_{h}(\pi)&=\{i:\pi_{i}\geq i+g,\!~\pi_{i}\geq h\}.
	\end{align*}

\vspace{-10pt}
\noindent  If $i\in g\textsf{Exc}_{\ell}(\pi)$, we have $\pi_{i}\geq i+g\geq \ell+g\geq h$. 
Therefore, 
\begin{align*} 
	g\textsf{Exc}_{\ell}(\pi)=\{i:\pi_{i}\geq i+g,\!~\pi_{i}\geq h,\!~i\geq \ell\}=\{i\in g\textsf{Exclp}_{h}(\pi):i\geq \ell\},
\end{align*}	
completing the proof.
\end{proof}
\begin{proof}[Proof of Theorem \ref{mian-general-theorem}]
	If $n\leq g+\ell-1$, 
	then $g\textsf{exc}_{\ell}(\sigma)=|\{i:\sigma_{i}\geq i+g,\!~i\geq \ell\}|=0$ for any $\sigma\in\mathfrak{S}_{n}$.
	It is known that $g\textsf{den}_{h}$ is Mahonian; see \cite[the paragraph preceding Section 3]{Liu-2024}.
	Therefore, $(g\textsf{exc}_{\ell},g\textsf{den}_{h})$ is equidistributed with $(\textsf{0},\textsf{inv})$ over $\mathfrak{S}_{n}$.
	In what follows, we assume that $n>g+\ell-1$.
	Clearly, $h\leq g+\ell\leq n$.
	Let $\sigma\in\mathfrak{S}_{n-1}$.	
	Denote 
    \begin{align*}
	A_{g,h,\ell}(\sigma)=\{i\in g\textsf{Exclp}_{h}(\sigma):i\leq \ell-1\}
	\text{~and~}
	B_{g,h,\ell}(\sigma)=[\ell-1]-A_{g,h,\ell}(\sigma).
    \end{align*}
	Let $w=\phi_{g,h,n}^{\text{den}}(\sigma,c)$. Denote $s=|g\textsf{Exclp}_{h}(\sigma)|$.
	By Theorem \ref{Thm-bijection-denl}, we have
	\begin{align*}
		g\textsf{Exclp}_{h}(w)&=
		\left\{
		\begin{aligned}
			&g\textsf{Exclp}_{h}(\sigma), \quad\quad\quad\quad~\text{if~}0\leq c\leq s+g-1,\\
			&g\textsf{Exclp}_{h}(\sigma)\cup\{k_{d}\},\quad\text{~otherwise}
		\end{aligned}
		\right. \\
		g\textsf{den}_{h}(w)&=g\textsf{den}_{h}(\sigma)+c, 
	\end{align*}
	where  $d=c-s-g+1$ and $k_{d}$ is the $d$-th non-$g$-gap $h$-level excedance-letter position of $\sigma$ from smallest to largest. Then
	\begin{itemize}
		\item[$\bullet$] If $0\leq c\leq s+g-1$, we have
		$g\textsf{Exclp}_{h}(w)=g\textsf{Exclp}_{h}(\sigma)$.
		By Lemma \ref{Lemma_Exc_Exclp}, we see that $g\textsf{Exc}_{\ell}(w)=g\textsf{Exc}_{\ell}(\sigma)$.
		
		\item[$\bullet$] If $s+g\leq c\leq s+g+|B_{g,h,\ell}(\sigma)|-1$,
		we have $g\textsf{Exclp}_{h}(w)=g\textsf{Exclp}_{h}(\sigma)\cup\{k_{d}\}$.
		Since $d=c-s-g+1$, then $1\leq d\leq |B_{g,h,\ell}(\sigma)|$.
		It follows that $k_{d}\in B_{g,h,\ell}(\sigma)$, and so $k_{d}\leq \ell-1$.
		By Lemma \ref{Lemma_Exc_Exclp}, we have $g\textsf{Exc}_{\ell}(w)=g\textsf{Exc}_{\ell}(\sigma)$.
		
		\item[$\bullet$]
		If $c\geq s+g+|B_{g,h,\ell}(\sigma)|$,
		we have $g\textsf{Exclp}_{h}(w)=g\textsf{Exclp}_{h}(\sigma)\cup\{k_{d}\}$.
		Since $d=c-s-g+1$, then $d\geq|B_{g,h,\ell}(\sigma)|+1$.		
		It follows that $k_{d}\notin B_{g,h,\ell}(\sigma)$, and hence $k_{d}\geq\ell$. 
		By Lemma \ref{Lemma_Exc_Exclp}, we see that $g\textsf{Exc}_{\ell}(w)=g\textsf{Exc}_{\ell}(\sigma)\cup\{k_{d}\}$.
	\end{itemize}
	Therefore, in summary,
\begin{equation}\label{eq-(excr,denl)-all-in-summary}
	\begin{split}
		g\textsf{Exc}_{\ell}(w)&=\left
		\{
		\begin{aligned}
			&g\textsf{Exc}_{\ell}(\sigma), \quad\quad\quad\quad~\text{if~}0\leq c\leq s+g+|B_{g,h,\ell}(\sigma)|-1\\
			&g\textsf{Exc}_{\ell}(\sigma)\cup\{k_{d}\},\quad\text{~otherwise,}
		\end{aligned}
		\right.\\ 
		g\textsf{den}_{h}(w)&=g\textsf{den}_{h}(\sigma)+c.
	\end{split}
\end{equation}	
By Lemma \ref{Lemma_Exc_Exclp}, we see that $g\textsf{Exclp}_{h}(\sigma)$ is the disjoint union of $g\textsf{Exc}_{\ell}(\sigma)$ and $A_{g,h,\ell}(\sigma)$.
Thus, $s=g\textsf{exc}_{\ell}(\sigma)+|A_{g,h,\ell}(\sigma)|$.
It follows that     
\begin{align*}
	s+g+|B_{g,h,\ell}(\sigma)|-1=g\textsf{exc}_{\ell}(\sigma)+|A_{g,h,\ell}(\sigma)|+g+|B_{g,h,\ell}(\sigma)|-1
	=g\textsf{exc}_{\ell}(\sigma)+g+\ell-2.
\end{align*}
Combining this with (\ref{eq-(excr,denl)-all-in-summary}), we get
	\begin{equation*}
		\begin{split}
			g\textsf{exc}_{\ell}(w)&=\left
			\{
			\begin{aligned}
				&g\textsf{exc}_{\ell}(\sigma), \quad\quad\quad\text{if~}0\leq c\leq g\textsf{exc}_{\ell}(\sigma)+g+\ell-2,\\
				&g\textsf{exc}_{\ell}(\sigma)+1,\quad\text{~otherwise,}
			\end{aligned}
			\right.\\ 
			g\textsf{den}_{h}(w)&=g\textsf{den}_{h}(\sigma)+c.
		\end{split}
	\end{equation*}
	Comparing with equation (\ref{eq-r-Euler--Mahonian}),
	we see that $(g\textsf{exc}_{\ell},g\textsf{den}_{h})$ is $(g+\ell-1)$-Euler--Mahonian.
\end{proof}

\section*{Acknowledgment}
\addcontentsline{toc}{section}{Acknowledgment} 
This work was supported by the National Natural Science Foundation of China (12101134).

\phantomsection
\begin{spacing}{0.5}
	
\end{spacing}
\end{document}